\documentclass[reqno]{article}

\usepackage[top=2.5cm, bottom=2.5cm, left=3cm, right=3cm]{geometry}

\usepackage[english]{babel}
\usepackage[utf8]{inputenc}

\usepackage{upgreek}

\usepackage{marginnote}
\usepackage{lmodern}
\usepackage{amssymb}
\usepackage{amsmath}
\usepackage{mathtools}
\usepackage{amsthm}
\usepackage[usenames,dvipsnames]{xcolor}
\usepackage{graphicx}
\usepackage{tikz}
\usepackage{pgf}
\usepackage{subcaption}
\usepackage{wrapfig}
\usepackage{array}
\usepackage{tabularx}
\usepackage{titlesec}
\usepackage{eepic}
\usepackage{multicol}
\usepackage[linktocpage=true]{hyperref}
\hypersetup{
    colorlinks=true,
    linkcolor=blue,
    filecolor=magenta,      
    urlcolor=cyan,
    pdftitle={Overleaf Example},
    pdfpagemode=FullScreen,
    }
\usepackage[hypcap=true]{caption}
\usepackage[cmtip,all]{xy}
\usepackage{enumitem}
\usepackage{leftidx}
\usepackage[mathscr]{euscript}
\usepackage{setspace}
\usepackage{needspace}

\usepackage{parskip}

\makeatletter
\def\thm@space@setup{%
  \thm@preskip=\parskip \thm@postskip=0pt
}
\makeatother

\allowdisplaybreaks


\usepackage{titlesec}
\titleformat{\section}[block]{\color{black}\large\bfseries\filcenter}{\thesection.}{0.5em}{}
\titleformat{\subsection}[hang]{\bfseries}{}{0.5em}{}

\numberwithin{equation}{section}


\newtheorem{theorem}{Theorem}[]
\newtheorem{lemma}[theorem]{Lemma}
\newtheorem{proposition}[theorem]{Proposition}
\newtheorem{corollary}[theorem]{Corollary}
\newtheorem{definition}[theorem]{Definition}

\newtheorem{remark}[theorem]{Remark}

\newtheorem{problem}{Problem}[]


\newtheorem{ltheorem}{Theorem}

\newtheorem{lcorollary}[ltheorem]{Corollary}

\newtheorem{llemma}[ltheorem]{Lemma}












\titleformat{\subsection}[runin]{\bfseries}{}{}{}[.]
\titleformat{\subsubsection}[runin]{\bfseries}{}{}{}[.]


\newcounter{introequation}

\makeatletter
\renewenvironment{proof}[1][\proofname]{%
   \par\pushQED{\qed}\normalfont%
   \topsep6\p@\@plus6\p@\relax
   \trivlist\item[\hskip\labelsep\bfseries#1\@addpunct{.}]%
   \ignorespaces
}{%
   \popQED\endtrivlist\@endpefalse
}
\makeatother











\newcommand{\equivalent}{\Longleftrightarrow}



\def\XXint#1#2#3{{\setbox0=\hbox{$#1{#2#3}{\int}$ }
\vcenter{\hbox{$#2#3$ }}\kern-.6\wd0}}


\def\1{\mathbf{1}}

\def\M{\mathcal{M}}

\def\N{\mathcal{N}}

\makeatletter
\newsavebox{\@brx}
\newcommand{\llangle}[1][]{\savebox{\@brx}{\(\m@th{#1\langle}\)}%
  \mathopen{\copy\@brx\mkern2mu\kern-0.9\wd\@brx\usebox{\@brx}}}
\newcommand{\rrangle}[1][]{\savebox{\@brx}{\(\m@th{#1\rangle}\)}%
  \mathclose{\copy\@brx\mkern2mu\kern-0.9\wd\@brx\usebox{\@brx}}}
\makeatother


\newcommand{\vertiii}[1]{
 {\left\vert\kern-0.25ex\left\vert\kern-0.25ex\left\vert #1 
  \right\vert\kern-0.25ex\right\vert\kern-0.25ex\right\vert}
}


\def\B{\mathcal{B}}


%




\newcommand{\EE}{\mathbf{E}}

\newcommand{\RR}{\mathbf{R}}

\newcommand{\ZZ}{\mathbf{Z}}






\def\H{\mathcal{H}}




\newcommand{\Ee}{\mathscr{E}}

\newcommand{\Pp}{\mathscr{P}}


\title{
 Maximal weak Orlicz types\\
 and the strong maximal on von Neumann algebras
}

\author{
  Adrián M. González-Pérez\thanks{
    Partially supported by Ramón y Cajal fellowship RYC2022-037045-I 
    (Ministerio de Ciencia, Spain).
  }, \\
  Javier Parcet
  \ and 
  Jorge Pérez García\thanks{
    Partially supported by pre-doctoral scholarship PRE2020-093245.
    The three authors were supported by the Severo Ochoa Grant CEX2023-001347-5(MICIU),
    PIE2023-50E106(CSIC) and PID2022-141354NB-I00(MICINN)
  }, \
}
\date{}

\begin{document}

\maketitle

\begin{abstract}
      Let $\EE_n: \M \to \M_n$ and $\EE_m: \N \to \N_m$ be two sequences of conditional expectations on finite von Neumann algebras. The optimal weak Orlicz type of the associated strong maximal operator $\Ee = (\EE_n\otimes \EE_m)_{n,m}$ is not yet known. In a recent work of Jose Conde and the two first-named authors
    , it was show  that $\Ee$ has weak type $(\Phi, \Phi)$ for a family of functions including $\Phi(t) = t \, \log^{2+\varepsilon} t$, for every $\varepsilon > 0$. In this article, we prove that the weak Orlicz type of $\Ee$ cannot be lowered below $L \log^2 L$, meaning that if $\Ee$ is of weak type $(\Phi, \Phi)$, then $\Phi(s) \not\in o(s \, \log^2 s)$. Our proof is based on interpolation. Namely, we use recent techniques of Cadilhac/Ricard to formulate a Marcinkiewicz type theorem for maximal weak Orlicz types. Then, we show that a weak Orlicz type lower than $L \log^2 L$ would imply a $p$-operator constant for $\Ee$ smaller than the known optimum as $p \to 1^{+}$.
\end{abstract}

\section*{Introduction}

Maximal inequalities in the context of noncommutative von Neumann algebras have a long history that traces back to the independent works of Cuculescu \cite{Cuculescu1971} on martingale sequences and Lance \cite{Lance1976Ergodic}, and later Yeadon \cite{Yeadon1977, Yeadon1980II}, on ergodic theory. More concretely, Cuculescu proved that, if $\EE_n: \N \to \N_n$ is a sequence of unital conditional expectations on a semifinite von Neumann algebra $(\N, \tau)$, then, for every positive element $x \in L_1(\N)$ and scalar $\lambda \geq 0$, there exists a projection $e_\lambda \in \Pp(\N)$ such that 
\begin{enumerate}[label={\rm \textbf{(\roman*)}}, ref={\rm {(\roman*)}}]
    \item \label{itm:weaktype1} 
    $e_\lambda \, s_n(x) \, e_\lambda \leq \lambda \, e_\lambda$, 
    for every $n \geq 0$, and
    \item \label{itm:weaktype2} 
    $\displaystyle{ \tau(\1 - e_\lambda) \lesssim \frac{\| x \|_1}{\lambda}}$.
\end{enumerate}
It is trivial to verify that, when $\, \N=L_\infty(\Omega)$ is Abelian, these two conditions are equivalent to the weak type $(1,1)$ of the maximal Doob operator $f \mapsto \sup_n |\EE_n[f]|$, see \cite{Doob1953stochastic}.  Like in the classical setting, the above noncommutative weak type $(1,1)$ implies a generalization of almost everywhere convergence, introduced by Lance, called \emph{bilateral almost uniform} convergence. Indeed, a sequence $(x_n)_n$ of $\tau$-measurable operators, see \cite{Terp1981lp} for the precise definition, converge to $x$ bilaterally almost uniformly if, for every $\varepsilon > 0$, there is a projection $e \in \Pp(\N)$ such that $\tau(\1 - e) < \varepsilon$ and
\[
  \big\| e \, (x_n - x)\, e \big\|_\infty \to 0.
\]
Asymmetrical versions of the above definition can be defined by placing the projection $e$ only on the left or on the right. Those would be referred to as \emph{almost uniform convergence}. 
Thus, Cuculescu's inequality implies the bilateral almost uniform convergence of $\EE_n[x]$ for every $x \in L_1(\N)$.

While the tools for handling noncommutative weak types of maximal operators were set up early on, the strong types only appeared later. The main idea here is that the classical maximal estimates can be interpreted as mixed-norm bounds in $L_p[\ell_\infty]$, ie
\[
    \Big\| \sup_n |f_n| \Big\|_p = \big\| (f_n)_n \big\|_{L_p[\ell_\infty]}.
\]
The noncommutative analogues of Banach space-valued $L_p$-spaces appeared firstly in the work of Pisier \cite{Pi1998}, who defined a way of constructing $L_p(\N;E)$, where $E$ needs to have a finer structure than that given by its Banach space norm
---called an \emph{operator space structure}--- see \cite{Pi2003, EffRu2000Book}. Also, the algebra $\N$ in Pisier's definition needed to be hyperfinite. More general von Neumann algebras were considered by Junge in \cite{Junge2004Fubini}, where noncommutative operator space-valued $L_p$-spaces were defined for QWEP von Neumann algebras. In the particular case of $L_p(\N; \ell_\infty)$, a definition independent of the approximation properties of $\N$ was reached in the works of Junge and Xu \cite{Jun2002Doob, JunXu2003, JunXu2005Best}. Namely, the space $L_p(\N; \ell_\infty)$ is defined as the elements admitting a factorization of the form
\[
   L_p(\N; \ell_\infty) = 
   L_{2p}(\N) \, \big( \N \bar\otimes \ell_\infty \big) \, L_{2p}(\N)
\]
where the norm is taken to be 
\[
  \big\| (x_n)_n \big\|_{L_p[\ell_\infty]}
  \, := \, 
  \inf \Big\{ \| a \|_{2p} \, \| (u_n)_n \|_{\ell_\infty[\N]} \, \| b\|_{2p} : x_n = a \, u_n \, b \Big\},
\]
see \cite{JunXu2003} for the details. Once the spaces $L_p(\M;\ell_\infty)$ were defined, the strong type $(p,p)$ for the maximal operator associated to a family of maps $x \mapsto S_n(x)$ can be expressed just as the boundedness of $S = (S_n)_n: L_p(\N) \to L_p(\N;\ell_\infty)$. 
These mixed-norm spaces satisfy many natural properties. For instance, it was show in a remarkable article of Junge \cite{Jun2002Doob} that the maximal function associated to a family of conditional expectation has strong type $(p,p)$ for $1<p$. Nevertheless, their compatibility with Marcinkiewicz interpolation is very subtle. The noncommutative generalization of the Marcinkiewicz interpolation theorem was obtained by Junge and Xu \cite{JunXu2003} and their proof turned out to be remarkably involved, see \cite{Dirksen2015IntMax} as well. In fact, and contrary to the classical case, the operators $S_n$ are required to be positivity-preserving and the operator norms in $L_p$ explode like 
\[
  c(p) : = \big\| S = (S_n)_n: L_p(\N) \to L_p(\N;\ell_\infty) \big\|
  \sim \big(p'\big)^2 \;\;\; \text{ as } \; p \to 1^{+},
\]
while in the classical case the constant $c(p)$ grows merely like $p'$ as $p \to 1^{+}$, where $p'$ will represent the conjugate exponent of $p$ in the forthcoming discussion. This growth was later shown to be optimal \cite{JunXu2005Best, Hu2009} and not a shortcoming of Junge/Xu's proof. One of the underlying reasons for this technicality has to do with the fact that the projections $e_\lambda$ in \ref{itm:weaktype1}--\ref{itm:weaktype2} should behave as noncommutative generalization of the measurable sets $\{ | \sup_n S_n(x) | \leq \lambda \}$ and thus it would be expected that the integral of $e_\lambda^\perp = 1 - e_\lambda$, as a positive measurable operator, should have an $L_p$-norm comparable to the mixed $L_p[\ell_\infty]$-norm of $(S_n(x))_n$. Nevertheless this is not the case as explained in \cite{JunXu2005Best} or \cite[Remark 3.5]{CondeGonPar2019}.

Recently, noncommutative maximal inequalities have received renewed attention. For instance, maximal ergodic theorems for group actions of polynomial-growth groups \cite{HongLiaoWang2019}, Fourier multipliers \cite{HongWangWang2020} and amenable groups \cite{CadilhacWang2022} have been obtained. Part of our work will be built on a recent article of Cadilhac and Ricard that has clarified many of the technical steps behind the original proof of the noncommutative Marcinkiewicz interpolation for maximal inequalities \cite{CadilhacRicard2023Inter}.


\subsection*{The strong maximal function}
Let $\M \bar\otimes \N$ be the spacial tensor product of two finite von Neumann algebras $(\M, \tau_\M)$ and $(\N, \tau_\N)$ endowed with normal and faithful tracial states and let $(\M_n)_{n}$ and $(\N_n)_n$ be ascending sequence of (unital) von Neumann subalgebras $\M_n \subset \M_{n+1} \subset \cdots \subset \M$ and $\N_n \subset \N_{n+1} \subset \cdots \subset \N$. We would usually refer to these ascending chains of subalgebras as \emph{filtrations}. We will furthermore assume that the increasing union of the subalgebras is weak-$\ast$ dense inside $\M$ and $\N$ respectively.
We will denote by $\EE_n \otimes \EE_m : \M \bar\otimes \N \to \M_n \bar\otimes \N_m$ the tensor product of the conditional expectations onto $\M_n$ and $\N_m$ respectively. Observe that, if both $\N$ and $\M$ are isomorphic to $L_\infty([0,1])$ and the subalgebras are given by $\N_n = \M_n = L_\infty([0,1];\Sigma_n)$, where $\Sigma_n$ is taken to be the $\sigma$-algebra generated by the dyadic sets of length $2^{-n}$, then the maximal operator associated to the family of conditional expectations $(\EE_n \otimes \EE_m)_{n,m}$ is a dyadic model for the classical \emph{strong maximal operator}. This operator was studied early on. Indeed, Jessen, Marcinkiewicz and Zygmund \cite{Jessen1935} proved, using our previous notation, that
\[
  f \in L \, \log L([0,1]^2)
  \;\;\;\; \implies \;\;\;\;
  \big(\EE_n \otimes \EE_m \big)[f] \to f \;\; \text{almost everywhere}.
\]
This was shown using an argument that relies on the $\limsup$ of a family of operators and the fact that the one-dimensional maximal function maps $L \log L$ into $L_1$. They also proved that $L \log L$ is the largest Orlicz class for which there is convergence almost everywhere to the original data. The optimality was proved by showing that almost everywhere convergence for an Orlicz class $L_\Phi$ implies a \emph{restricted weak Orlicz type} estimate. The full weak Orlicz type $(\Phi, \Phi)$, where $\Phi(s) = t \, ( 1 + \log_+ s )$ was later proved using geometric arguments by Córdoba and Feffermann \cite{CorFeff1975}. An alternative proof of the weak Orlicz type $(\Phi, \Phi)$ using a Fubini-type argument to combine two different operators with weak type $(1,1)$ was given in \cite{Guzman1972ProductBases}. 

It is natural to ask if these results hold for general finite von Neumann algebras. Let $\Phi: [0,\infty) \to [0,\infty]$ be a locally finite, non-decreasing and convex function with $\Phi(0) = 0$ and $\lim_{t \to \infty} \Phi(t) = \infty$. Recall the following definition of (maximal) weak Orlicz type $(\Phi, \Phi)$.

\begin{definition}
    \label{def: maximal weak Orlicz type}
    Let $(\N, \tau_\N)$ and $(\M, \tau_\M)$ be semifinite von Neumann algebras. A sequence $(S_n)_{n \geq 0}$ of operators $S_n: L_1(\N) \cap \N \to L_0(\M)$ is said to be of (maximal) weak type $(\Phi,\Phi)$ if there exist a $C\geq 0$ such that for every $x \geq 0$ any $\lambda > 0$ there is a projection $e_\lambda \in \Pp(\M)$ satisfying:
    \begin{enumerate}[label={\rm \textbf{(\roman*)}}, ref={\rm {(\roman*)}}]
        \item \label{itm:MaxOrlicz1} $\displaystyle \big\| e_\lambda \, S_n(x) \, e_\lambda \big\|_{\infty} \leq \lambda$ 
        for all $n\geq 0$, and
        \item \label{itm:MaxOrlicz2}
        $\displaystyle \tau_\M(1-e_\lambda) \leq \tau_\N\circ \Phi\Big( C \, \frac{|x|}{\lambda}\Big)$.
    \end{enumerate}
\end{definition}

We will denote the optimal constant $C \geq 0$
in Definition $\ref{def: maximal weak Orlicz type}$ by $[S]_{(\Phi, \Phi)}$. Clearly, when $\N=L_\infty(\Omega)$ is Abelian, the definition recovers the classical one. The following problem, whose solution would generalize the results of \cite{Jessen1935, Guzman1972ProductBases, CorFeff1975}, was stated in \cite{CondeGonPar2019}.

\begin{problem}[{\rm from \cite[Conjecture B3]{CondeGonPar2019}}] \normalfont
\label{prp:ProblemA}
    Let $(\M,\tau_\M)$ and $(\N, \tau_\N)$ be two finite von Neumann algebras and $\EE_n \otimes \EE_m: \M \bar\otimes \N \to \N_n \bar\otimes \M_m$ be the tensor product of two conditional expectations associated to filtrations $(\M_n)_n$ and $(\N_n)_n$.
    \begin{enumerate}[label={\rm \textbf{(\roman*)}}, ref={\rm {(\roman*)}}]
        \item \label{itm:ProblemA.1}
        Is the optimal weak Orlicz type of the maximal operator associated to $(\EE_n \otimes \EE_m)_{n,m}$ given by $(\Phi, \Phi)$, where $\Phi(s) = s \, (1 + \log_+ s)^2$ \, ?
        \item \label{itm:ProblemA.2}
        Is the largest Orlicz class $L_\Phi(\M \bar\otimes \N)$ for which $(\EE_n \otimes \EE_m)(x)$ converges bilaterally almost uniformly for every $x \in L_\Phi$ as $n, m \to \infty$ given by $\Phi(s) = s \, (1 + \log_+ s)^2$ \, ?
    \end{enumerate}
\end{problem}

Although the problem above makes perfect sense in the case of semifinite von Neumann algebras, the crux of the problems lies in the local integrability, thus we will restrict our discussion to the finite case. Observe that, contrary to the classical case, the conjectured optimal here is $L \log^2 L$ instead of $L \log L$. This is the reasonable thing to ask for since, by \cite{Hu2009}, the noncommutative Doob maximal maps $L\log^2 L$ into $L_1[\ell_\infty]$ and that bound is known to be optimal. 
%
%
Partial answers to Problem \ref{prp:ProblemA}\ref{itm:ProblemA.2} have been obtained. For instance in \cite[Theorem A]{CondeGonPar2019} it was shown, using an argument that mimics the $\limsup$ argument from \cite{Jessen1935}, that there is bilateral almost uniform convergence for every $x \in L \log^2 L$. This result can be obtained as well from earlier results in \cite{HongSun2018}. Whether there are larger Orlicz classes than $L \log^2 L$ for which almost uniform convergence holds is an open problem, although it doesn't seem likely.

Problem \ref{prp:ProblemA}\ref{itm:ProblemA.1} can be decoupled into two halves. The first is whether the conjectured weak Orlicz type $(\Phi, \Phi)$ holds. The second is whether the Orlicz type cannot be lowered below $\Phi(s) = s \, (1 + \log_+ s)^2$. That is, if $\Psi$ is another increasing and convex function with $\Psi \in o(\Phi)$, then, $\Ee = (\EE_n \otimes \EE_m)_{n,m}$ cannot be of weak type $(\Psi, \Psi)$. Regarding the first problem, it was also shown in \cite[Theorem B3]{CondeGonPar2019} that the noncommutative strong maximal operator has weak Orlicz type $(\Theta, \Theta)$, where $\Theta(s) = L \log^{2 + \varepsilon} L$, for every $\varepsilon > 0$. Our main result in this article solves, with an argument relying on interpolation, the second part of the problem. Thus, the optimal weak Orlicz type for the noncommutative strong maximal cannot be lowered below $L \log^2 L$. To sharpen the weak Orlicz type from $L \log^{2 + \varepsilon} L$ to $L \log^2 L$ remains as a challenging open problem.





\subsection*{Our results}
As said before, this article gives a solution to one half of Problem \ref{prp:ProblemA}\ref{itm:ProblemA.1}. The key is a Marcinkiewicz-type interpolation theorem asserting that, if $\Ee = (\EE_n \otimes \EE_m)_{n,m}$ were of weak Orlicz type $(\Psi, \Psi)$, with $\Psi(s) \in o\big( s \, (1 + \log_+ s )^2\big)$, then the operator norm in $L_p$ would satisfy that 
\[
    c(p) := \big\| (\EE_n \otimes \EE_m)_{n,m}: L_p(\N \bar\otimes \M) \to L_p(\N \bar\otimes \M; \ell_\infty) \big\|
     \in o\big( (p')^4 \big)
     \;\;\; \text{ as } \; p \to 1^{+},
\]
but this would contradict the known optimal bound for the norm for the noncommutative strong maximal on $L_p$, which is, by a Fubini-type argument, the square of the optimal norm of the noncommutative Doob maximal. Thus, the optimal weak Orlicz type for the noncommutative strong maximal can't be lowered below $L \log^2 L$. 


Our main tool will be the following Marcinkiewicz-like interpolation theorem for noncommutative maximal functions.
Even though we are interesting in applications to finite von Neumann algebras, we will formulate our result in the semifinite context since ---we suspect--- it can be a useful tool with applications beyond Problem \ref{prp:ProblemA}. Here $q_{\Phi}$ and $p_{\Phi}$ denote the upper and lower Matuszewska-Orlicz indices \cite{Maligranda1985} of a Young function $\Phi$, which coincide with the usual Boyd indices \cite{Boyd1969} of the respective Orlicz spaces. We will also assume in all our statements that $q_{\Phi_1} < p_{\Phi_2}$.

\begin{ltheorem}
    \label{thm:MainTheorem}
    Let $(\N,\tau_\N)$ and $(\M,\tau_\M)$ be von Neumann algebras endowed with n.s.f. traces and let $\Phi_1,\Phi_2$ be locally finite Young functions. Given $S=(S_n)_{n\geq 1}$, a family of positive linear maps $S_n: L_1(\N) \cap L_\infty(\N) \to L_0(\M)$ such that $S$ is of maximal weak type $(\Phi_1,\Phi_1)$ and $(\Phi_2,\Phi_2)$ simultaneously, we have that 
    there exist a function $p \mapsto F(p; \Phi_1,\Phi_2) :(q_{\Phi_1},p_{\Phi_2}) \to (0,\infty)$ such that: 
    $$ 
        \big\|S: L_p(\N) \to L_p(\M;\ell_\infty) \big\|
        \, \lesssim \,
        \max \Big\{ [S]_{(\Phi_1,\Phi_1)},  \, [S]_{(\Phi_2,\Phi_2)} \Big\} \, F(p; \Phi_1,\Phi_2),       
    $$
    for every $q_{\Phi_1} < p < q_{\Phi_2}$, where the term $F(p;\Phi_1, \Phi_2)$ is given by
    \begin{equation}
        \label{eq:DefConstantep}
        \tag{${\mathrm{Ct}_p}$}
        F(p; \Phi_1,\Phi_2) :=
        \inf_{k_0} 
        \Bigg\{
            \bigg[
                \sum_{k>k_0} \left(2^{-k}  \, \big( \Phi_1(2^k) + \Phi_2(2^{k_0})\big)^{\frac1{p}}\right)^{\frac12}
                + \sum_{k\leq k_0}\left(2^{-k} \, \Phi_2(2^k)^{\frac1{p}}\right)^{\frac12}\bigg]^2
        \Bigg\}.
    \end{equation}
\end{ltheorem}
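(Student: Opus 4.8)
The plan is to reduce the mixed--norm bound to the construction of a single dominating element, and to build that element by a dyadic Marcinkiewicz decomposition feeding the two weak Orlicz hypotheses on complementary ranges of heights, glued together by the interpolation technique of Cadilhac--Ricard \cite{CadilhacRicard2023Inter}.

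First I would normalize and reformulate. Dividing every $S_n$ by $c:=\max\{[S]_{(\Phi_1,\Phi_1)},[S]_{(\Phi_2,\Phi_2)}\}$ makes both optimal constants $\le 1$ and only rescales the target norm by $c$, so one may assume $[S]_{(\Phi_i,\Phi_i)}\le 1$ and prove the estimate with $F(p;\Phi_1,\Phi_2)$ alone on the right. Since $L_1(\N)\cap L_\infty(\N)$ is dense in $L_p(\N)$ and the $S_n$ are positive, it suffices to bound $\big\|(S_n(x))_n\big\|_{L_p(\M;\ell_\infty)}$ for a fixed $0\le x\in L_1(\N)\cap L_\infty(\N)$ with $\|x\|_p=1$, and for positive sequences one has the elementary identity
\[
\big\|(y_n)_n\big\|_{L_p(\M;\ell_\infty)}=\inf\big\{\,\|a\|_{2p}^{2}:0\le y_n\le a^{2}\ \text{for all }n\,\big\}.
\]
Hence it is enough to produce, for each $k_0\in\mathbb Z$, a positive $a=a_{k_0}\in L_{2p}(\M)$ with $S_n(x)\le a^2$ for all $n$ and with $\|a\|_{2p}^2$ no larger than the quantity inside the infimum in \eqref{eq:DefConstantep} at that value of $k_0$; minimizing over $k_0$ then yields the theorem.

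The element $a$ is assembled scale by scale. Fixing $k_0$ and writing $\lambda_k=2^k$, at each height $\lambda_k$ I would split $x$ spectrally and apply the weak type $(\Phi_1,\Phi_1)$ hypothesis to the ``large'' part on the scales $k>k_0$ and the weak type $(\Phi_2,\Phi_2)$ hypothesis to the remaining part on the scales $k\le k_0$, obtaining Cuculescu projections $e_k\in\Pp(\M)$ with $\|e_k\,S_n(\cdot)\,e_k\|_\infty\le 2^k$ on the relevant piece and $\tau_\M(\mathbf{1}-e_k)$ dominated by the corresponding $\tau_\N\circ\Phi_i$-quantity. Because the single traces $\tau_\M(\mathbf{1}-e_{2^k})$ do \emph{not} by themselves control $\big\|(S_n(x))_n\big\|_{L_p(\M;\ell_\infty)}$ (cf.\ \cite[Remark 3.5]{CondeGonPar2019}), the naive distribution--function argument is unavailable and one follows Cadilhac--Ricard: the dominating element is taken as a genuine \emph{sum} $a=\sum_k 2^{k/2}f_k$ with $f_k\ge 0$ assembled from $e_{k-1},e_k$ and $e_{k_0}$, and one must construct a contraction $(u_n)_n\in\M\bar\otimes\ell_\infty$ realizing $S_n(x)=a\,u_n\,a$, the diagonal corners coming from $e_k\,S_n(\cdot)\,e_k\le 2^k$ and the off-diagonal corners $e_k^\perp S_n(\cdot)\,e_k$ absorbed via Cauchy--Schwarz. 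Estimating $a$ by the triangle inequality in $L_{2p}$,
\[
\|a\|_{2p}\le\sum_k 2^{k/2}\|f_k\|_{2p}\lesssim\sum_k\big(2^{k}\,\tau_\M(\mathbf{1}-e_k)^{1/p}\big)^{1/2},
\]
it is precisely the necessity of summing the square roots --- instead of taking the square root of the sum --- that produces the outer square and the $(\,\cdot\,)^{1/2}$-summands of \eqref{eq:DefConstantep} (the same phenomenon behind the $(p')^2$, rather than $p'$, growth of the noncommutative Doob maximal). Plugging in the weak--type trace bounds, estimating them against powers of $2^k$ by convexity of the $\Phi_i$ and the normalization $\|x\|_p=1$, and optimizing the truncation thresholds, one recovers the summands $2^{-k}\big(\Phi_1(2^k)+\Phi_2(2^{k_0})\big)^{1/p}$ for $k>k_0$ --- the term $\Phi_2(2^{k_0})$ being the base--scale contribution propagated to every finer scale --- and $2^{-k}\Phi_2(2^k)^{1/p}$ for $k\le k_0$.

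The main obstacle is this gluing step, i.e.\ implementing the Cadilhac--Ricard construction in the present Orlicz generality: choosing the $f_k$, verifying that the middle factors $(u_n)_n$ are uniformly bounded, and checking the displayed $L_{2p}$ estimate for $a$. Everything else is comparatively routine: the convergence of all the series as $k\to\pm\infty$ is guaranteed by the index hypotheses, which force $\Phi_1(2^k)=o(2^{kp})$ as $k\to+\infty$ and $\Phi_2(2^k)=o(2^{kp})$ as $k\to-\infty$ (equivalently $q_{\Phi_1}<p<p_{\Phi_2}$), and the passage from the dense subalgebra $L_1(\N)\cap L_\infty(\N)$ to the full operator--norm statement is a standard limiting argument.
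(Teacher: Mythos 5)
Your plan identifies the right overall shape --- normalize so both Orlicz quasi-norms are $\le 1$, reduce to positive $x$, use the positive--sequence formula for the $L_p[\ell_\infty]$-norm, apply the two weak type hypotheses at dyadic heights $\lambda_k$ with $\Phi_1$ for $k>k_0$ and $\Phi_2$ for $k\le k_0$, and recognize that the outer square in $F(p;\Phi_1,\Phi_2)$ arises from summing square roots rather than taking the square root of a sum --- but the proposal stops at an outline and leaves the two load-bearing steps unsupplied.

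The first gap is the reduction to projections. The paper does not argue directly for general positive $x$; it first proves the estimate when $x=r$ is a \emph{finite projection}, where the exact identity $\tau\circ\Phi(r/\lambda)=\Phi(1/\lambda)\,\tau(r)$ lets one convert the weak-type trace bounds into powers of $\tau(r)=\|r\|_p^p$ and track the constant cleanly. Only afterwards is the general case recovered, via the Cadilhac--Ricard binary decomposition $x=\sum_m 2^{-m}r_m$ (Lemma~\ref{lemma: Cadilhac-Ricard lemma, binary decomposition}) together with the sub-additivity of singular numbers for $\preceq$ and the dilation operators $D_\eta$, which produce the missing factor $\|x\|_p$. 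Your sketch appeals instead to ``convexity of the $\Phi_i$ and the normalization $\|x\|_p=1$''; without the projection reduction it is not clear how you would control $\tau\circ\Phi_i(|x|/\lambda_k)$ by $\|x\|_p^p$, and the crucial passage from the projection case to general $x$ is absent entirely.

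The second gap is the construction of the majorant. You propose to build $a=\sum_k 2^{k/2}f_k$ and to exhibit contractions $(u_n)_n$ realizing $S_n(x)=a\,u_n\,a$, with off-diagonal corners ``absorbed via Cauchy--Schwarz,'' and you explicitly flag this as ``the main obstacle.'' It is precisely this obstacle that the paper resolves, and with a different and lighter device: one first makes the Cuculescu projections decreasing via $\widetilde e_k=\bigwedge_{j\le k}e_j$, forms the orthogonal shells $q_k=\widetilde e_k-\widetilde e_{k+1}$, and then applies the diagonal domination lemma (Lemma~\ref{lemma: Cadilhac-Ricard lemma, diagonal domination inequality}, i.e.\ \cite[Lemma~3.1]{CadilhacRicard2023Inter}): $\big(\sum q_k\big)y\big(\sum q_k\big)\le\big(\sum d_k^{-1}\big)\sum d_k\,q_k y q_k$ for any positive weights $d_k$. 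Combined with $q_k S_n(r) q_k\le\lambda_k q_k$ this immediately gives the majorant $z_r=\big(\sum d_k^{-1}\big)\sum d_k\lambda_k q_k$ with $S_n(r)\le z_r$; the factorization $S_n(r)=z_r^{1/2}u_n z_r^{1/2}$ with $\|u_n\|\le1$ is then automatic from positivity and need not be constructed by hand, so the Cauchy--Schwarz off-diagonal analysis you anticipate is a detour. Optimizing the free weights $d_k$ is what then yields the $(\,\cdot\,)^{1/2}$-summands in $F(p;\Phi_1,\Phi_2)$. Since the diagonal domination lemma, the $\widetilde e_k$/$q_k$ construction, and the projection-to-general-$x$ majorization machinery are all missing from your argument, the proof as written does not go through.
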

The proof of this theorem follows closely the steps of \cite{CadilhacRicard2023Inter}. In fact, in their Remark \cite[Remark 3.6]{CadilhacRicard2023Inter} it is already stated that a qualitative result like our Theorem \ref{thm:MainTheorem} would hold for rearrangement invariant spaces $E(\M,\tau)$ with $q_E$ and $p_E$ being their Boyd indices. Our addition is to turn the qualitative statement into a quantitative one by bounding the norm in terms of $p$. It is this explicit bound in terms which allows us to solve one half of Problem \ref{prp:ProblemA}\ref{itm:ProblemA.2}. 

The expression defining $F(p;\Phi_1,\Phi_2)$ can be better understood in the following manner. Let $(a_k)_{k \in \ZZ}$ and $(b_n)_{k \in \ZZ}$ be non-decreasing sequences. Their concatenation at $k_0$ can be defined as the non-decreasing sequence $a_k {\#}_{k_0} b_k$ given by $a_k$ when $k \leq k_0$ and $a_{k_0}+ b_k$ when $k > k_0$. Then, we can rewrite the expression as
\[
    F(p;\Phi_1,\Phi_2) = \inf_{k_0 \in \ZZ} \Big\| \Big( 2^{-k} \, \big( \Phi_2(2^{k}) {\#}_{k_0} \Phi_1(2^{k}) \big)^\frac1{p} \Big)_{k \in \ZZ} \Big\|_{\ell_{\frac12}(\ZZ)}.
\]
In Remark \ref{rmk: mejora exponente} we will see that the $\frac12$-quasi-norm can be improved to a ${\frac{p}{p+1}}$-quasi-norm when the operator $S$ is evaluated over projections. Nevertheless, both give the same growth order when $p \to 1^{+}$. 
Also, the maximum of $[S]_{(\Phi_1,\Phi_1)}$ and $[S]_{(\Phi_1,\Phi_1)}$ in Theorem \ref{thm:MainTheorem} above can be improved to an expression that recovers the geometric mean of both quantities when the weak types Orlicz types are just weak types $(p_1,p_1)$ and $(p_2,p_2)$ respectively, see Remark \ref{rmk: mejora constante}.
While the quantity $F(p;\Phi_1, \Phi_2)$ can be hard to compute in explicit terms for general Orlicz spaces, it can be asymptotically estimated for many reasonable Orlicz classes like $L\, \log^\alpha L$ spaces, see Corollary \ref{cor:growth order of L log L and optimality}. Before stating our result for the strong maximal, the following monotonicity theorem is needed.

\begin{llemma}
    \label{lem: Monotonia}
    Let $F(p;\Phi)$ be the constant defined by \eqref{eq:DefConstantep} when the second Orlicz type just coincides with the strong type $(\infty, \infty)$. Assume that $F(p;\Phi) \to \infty$ as $p \to q_{\Phi}^+$ and take $\Psi \in o( \Phi )$ another Young function with $q_\Phi = q_\Psi$, then $F(p;\Psi) \in o \big( F(p;\Phi) \big)$ as $p \to q_\Phi^+$.
\end{llemma}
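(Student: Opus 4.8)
The plan is to begin by unwinding \eqref{eq:DefConstantep} in the degenerate case at hand. Letting the second endpoint be $L_\infty$ amounts to taking $\Phi_2$ to be the Young function that vanishes on $[0,1]$ and equals $+\infty$ on $(1,\infty)$, so that $\Phi_2(2^{k})=0$ for $k\le 0$ and $\Phi_2(2^{k})=+\infty$ for $k\ge 1$. Consequently the only values of $k_0$ that do not make \eqref{eq:DefConstantep} infinite are $k_0\le 0$; for those the second sum vanishes and the term $\Phi_2(2^{k_0})$ inside the first sum is $0$, while among all $k_0\le 0$ the first sum is smallest at $k_0=0$. Hence
\[
  F(p;\Phi)\;=\;\Big(\sum_{k\ge 1}2^{-k/2}\,\Phi(2^{k})^{\frac1{2p}}\Big)^{2}\;=:\;G(p;\Phi)^{2},
\]
and proving $F(p;\Psi)\in o\big(F(p;\Phi)\big)$ reduces to proving $G(p;\Psi)\in o\big(G(p;\Phi)\big)$ as $p\to q_\Phi^{+}$.

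For the latter I would run the classical splitting of a convergent series into a fixed head plus a small tail. Fix $\varepsilon>0$. Since $\Psi(t)/\Phi(t)\to 0$ as $t\to\infty$, pick $K=K(\varepsilon)$ with $\Psi(2^{k})\le\varepsilon\,\Phi(2^{k})$ for all $k\ge K$; then $\Psi(2^{k})^{1/2p}\le\varepsilon^{1/2p}\,\Phi(2^{k})^{1/2p}$ for $k\ge K$, whence
\[
  G(p;\Psi)\;\le\;\underbrace{\sum_{k=1}^{K-1}2^{-k/2}\,\Psi(2^{k})^{\frac1{2p}}}_{=:R_K(p)}\;+\;\varepsilon^{\frac1{2p}}\,G(p;\Phi).
\]
Each summand of $R_K(p)$ is monotone in $p$, so $R_K(p)\le\sum_{k=1}^{K-1}2^{-k/2}\max\{\Psi(2^{k})^{1/2q_\Phi},1\}=:C_K<\infty$ uniformly for $p\ge q_\Phi$. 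By hypothesis $G(p;\Phi)=F(p;\Phi)^{1/2}\to\infty$ as $p\to q_\Phi^{+}$, so $C_K/G(p;\Phi)\to 0$ and therefore
\[
  \limsup_{p\to q_\Phi^{+}}\frac{G(p;\Psi)}{G(p;\Phi)}\;\le\;\varepsilon^{\frac1{2q_\Phi}}.
\]
Letting $\varepsilon\to 0$ gives $G(p;\Psi)=o\big(G(p;\Phi)\big)$, hence $F(p;\Psi)=o\big(F(p;\Phi)\big)$. (The assumption $q_\Psi=q_\Phi$ guarantees $F(p;\Psi)<\infty$ on a right-neighbourhood of $q_\Phi$; in any case the inequality above already re-derives this.)

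There is no serious obstacle here: the only two points that require care are the first-paragraph reduction — correctly evaluating the infimum over $k_0$ once one endpoint collapses to $L_\infty$, keeping track of which $k_0$ force an infinite value — and the uniform bound on the finite head $R_K(p)$ as $p\to q_\Phi^{+}$, which is exactly what legitimises passing to the $\limsup$. Everything else is the routine head-plus-tail estimate applied to the generating function $G(p;\Phi)$ of the $\ell_{1/2}$-quasi-norm.
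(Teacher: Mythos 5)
Your proposal is correct and proves exactly the right statement. The reduction of \eqref{eq:DefConstantep} at the $L_\infty$ endpoint to $F(p;\Phi)=\big(\sum_{k\geq 1}2^{-k/2}\Phi(2^k)^{1/2p}\big)^2$ matches the paper's (the index starting at $0$ or $1$ is immaterial), and the two hypotheses are used in the same essential way: $\Psi\in o(\Phi)$ controls the tail and $F(p;\Phi)\to\infty$ absorbs the head.

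The route differs slightly in the decomposition. The paper partitions $\{k\geq 0\}$ into the countably many layers $E_n(r)=\{k:\Psi(2^k)\in(r^{n+1}\Phi(2^k),\,r^n\Phi(2^k)]\}$ (with $E_0(r)$ the finite exceptional set) and then sums a geometric series in $n$ to get the bound $\limsup_{p\to q_\Phi^+}F(p;\Psi)/F(p;\Phi)\leq\big(r^{1/2q_\Phi}/(1-r^{1/2q_\Phi})\big)^2$, finally letting $r\to 0$. You instead make a single cut at $K=K(\varepsilon)$ where $\Psi\leq\varepsilon\Phi$ kicks in, bound the finite head uniformly in $p$ near $q_\Phi$ (correctly handling the two monotonicity regimes via $\max\{\Psi(2^k)^{1/2q_\Phi},1\}$), get $\limsup\leq\varepsilon^{1/2q_\Phi}$, and let $\varepsilon\to 0$. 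Your version is cleaner and shorter; the paper's multi-layer decomposition does not buy any extra strength in this lemma, since both arguments discard the head by the same divergence hypothesis. In short: same skeleton (bounded head plus $o(1)$-times-$F(p;\Phi)$ tail), simpler partition on your side, and no gaps.
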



As a corollary of the preceding monotonicity lemma and the interpolation result we get

\begin{lcorollary}
    Let $\EE_n \otimes \EE_m: \M \bar\otimes \N \to \M_n \bar\otimes \N_m$ be as before. If the noncommutative strong maximal associated to $(\EE_n \otimes \EE_m)_{n,m}$ has weak type $(\Psi, \Psi)$, then $\Psi \not\in o\big(s\,(1 + \log_+ s )^2 \big)$ 
\end{lcorollary}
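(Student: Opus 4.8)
The plan is to argue by contradiction, running the interpolation machinery backwards. Suppose $\Ee=(\EE_n\otimes\EE_m)_{n,m}$ is of maximal weak Orlicz type $(\Psi,\Psi)$ for some locally finite Young function $\Psi$ with $\Psi\in o\big(s\,(1+\log_+s)^2\big)$, and set $\Phi(s)=s\,(1+\log_+s)^2$. I would first feed this into Theorem~\ref{thm:MainTheorem}, applied to the algebra $\N\bar\otimes\M$ (re-indexing the double sequence by $\mathbb{N}$ via a bijection $\mathbb{N}\leftrightarrow\mathbb{N}^2$, which changes nothing since the $\ell_\infty$-norm ignores order), with $\Phi_1=\Psi$ and $\Phi_2$ the Young function of $L_\infty$. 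The second hypothesis costs nothing: each $\EE_n\otimes\EE_m$ is a unital, hence contractive, conditional expectation, so $\Ee$ is of maximal weak type $(\infty,\infty)$ with constant $\le 1$ — take $e_\lambda=1$ if $\|x\|_\infty\le\lambda$ and $e_\lambda=0$ otherwise — and $p_{\Phi_2}=q_{\Phi_2}=\infty$. One also checks $q_\Psi=1$: convexity and $\Psi(0)=0$ force $p_\Psi\ge1$, while $q_\Psi>1$ would give $\Psi(s)/s^r\to\infty$ for any $r\in(1,q_\Psi)$, contradicting $\Psi\in o(\Phi)$; in particular $q_{\Phi_1}<p_{\Phi_2}$. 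Theorem~\ref{thm:MainTheorem} then yields
\[
  c(p):=\big\|\Ee:L_p(\N\bar\otimes\M)\to L_p(\N\bar\otimes\M;\ell_\infty)\big\|
  \;\lesssim\; [\Ee]_{(\Psi,\Psi)}\,F(p;\Psi)\qquad\text{for all }1<p<\infty.
\]

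Next I would invoke the monotonicity Lemma~\ref{lem: Monotonia} with $\Phi(s)=s\,(1+\log_+s)^2$. By Corollary~\ref{cor:growth order of L log L and optimality} one has $F(p;\Phi)\asymp (p')^4$ as $p\to 1^+$, so in particular $F(p;\Phi)\to\infty$; since $q_\Psi=q_\Phi=1$ and $\Psi\in o(\Phi)$, Lemma~\ref{lem: Monotonia} gives $F(p;\Psi)\in o\big(F(p;\Phi)\big)=o\big((p')^4\big)$ as $p\to1^+$. Combined with the displayed estimate and $[\Ee]_{(\Psi,\Psi)}<\infty$, this forces $c(p)\in o\big((p')^4\big)$ as $p\to1^+$.

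To finish I would contradict this with the known sharp lower bound for $c(p)$. Testing $\Ee$ on product data $y\otimes z$, for which $\EE_n\otimes\EE_m(y\otimes z)=\EE_n(y)\otimes\EE_m(z)$, and using that the positive cone of the mixed-norm space $L_p[\ell_\infty]$ tensorizes, $c(p)$ is comparable to the square of the $L_p\to L_p[\ell_\infty]$ norm of the noncommutative Doob maximal; the latter grows like $(p')^2$, and its optimality — in particular the lower bound $\gtrsim(p')^2$ — is known (\cite{CondeGonPar2019}, building on \cite{JunXu2005Best,Hu2009}). Hence $c(p)\gtrsim(p')^4$, which is incompatible with $c(p)\in o\big((p')^4\big)$. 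This contradiction shows that no such $\Psi$ exists, i.e.\ $\Psi\not\in o\big(s\,(1+\log_+s)^2\big)$.

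The bulk of the work is already carried by Theorem~\ref{thm:MainTheorem}, Lemma~\ref{lem: Monotonia} and the growth computation of Corollary~\ref{cor:growth order of L log L and optimality}, so the argument above is mostly assembly. The points needing care — and where I expect the most friction — are the quantitative bookkeeping: one must know $F(p;\Phi)\asymp(p')^4$ rather than merely $F(p;\Phi)\to\infty$, so that the $o(\cdot)$ produced by Lemma~\ref{lem: Monotonia} genuinely becomes $o\big((p')^4\big)$, and one must use the Doob lower bound as a pointwise-in-$p$ estimate $c(p)\gtrsim(p')^4$ (not just an asymptotic statement along a subsequence), together with the Fubini/tensorization identity that pins the strong-maximal $L_p$-norm to the square of the Doob one.
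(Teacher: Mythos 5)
Your argument follows the same route the paper uses: apply Theorem~\ref{thm:MainTheorem} with $\Phi_1=\Psi$ and $\Phi_2$ the $L_\infty$ Young function $\chi_\infty$, noting that conditional expectations give the strong type $(\infty,\infty)$ for free, then invoke Lemma~\ref{lem: Monotonia} together with the bound $F(p;\Phi,\chi_\infty)=O\big((p')^4\big)$ (with $\Phi(s)=s(1+\log_+s)^2$) from Corollary~\ref{cor:growth order of L log L and optimality} to get $c(p)\in o\big((p')^4\big)$, and contradict the Fubini-type $(p')^4$ lower bound for the strong maximal from \cite{Hu2009,JunXu2005Best}. This is exactly the assembly the paper carries out: Corollary~C is presented as a consequence of Lemma~\ref{lem: Monotonia} and Theorem~\ref{thm:MainTheorem}, with the quantitative estimate and the contradiction worked out in Corollary~\ref{cor:growth order of L log L and optimality}.

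One step, however, is not correct as written: the verification that $q_\Psi=1$. You argue that ``$q_\Psi>1$ would give $\Psi(s)/s^r\to\infty$ for any $r\in(1,q_\Psi)$,'' but by \eqref{eq: cotas para Phi} the lower bound on $\Psi(t)$ for large $t$ is $t^{p_\Psi-\varepsilon}$, governed by the \emph{lower} Matuszewska--Orlicz index $p_\Psi$, not $q_\Psi$ (for $t>1$ the minimum in \eqref{eq: cotas para Phi} is $t^{p_\Psi-\varepsilon}$, since $p_\Psi\le q_\Psi$). Your reasoning therefore shows $p_\Psi=1$, which is not what the argument needs: it is the upper index $q_\Psi$ that must equal $1$, both so that Theorem~\ref{thm:MainTheorem} gives information for $p$ near $1^+$ (the admissible range is $q_{\Phi_1}<p<p_{\Phi_2}$) and so that Lemma~\ref{lem: Monotonia}'s hypothesis $q_\Phi=q_\Psi$ is met. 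And $q_\Psi$ can genuinely exceed $1$ even when $\Psi\in o(\Phi)$: take $\Psi(t)=t^2$ on $[0,1]$ and $\Psi(t)=2t-1$ for $t>1$; this is a convex Young function, linear at infinity, yet choosing $s<1/t$ in $M_\Psi(t)=\sup_s\Psi(st)/\Psi(s)$ already gives $M_\Psi(t)\ge t^2$, so $q_\Psi=2$. The standard repair is to observe that, on a finite algebra, maximal weak Orlicz type depends only on $\Psi$ at infinity (for $\lambda>\sup_n\|S_n(x)\|_\infty$ take $e_\lambda=\1$), and that weak type $(\Psi,\Psi)$ passes to any larger Young function $\widetilde\Psi\ge\Psi$ since Definition~\ref{def: maximal weak Orlicz type}\ref{itm:MaxOrlicz2} only becomes weaker; one may then replace $\Psi$ by such a majorant, still in $o(\Phi)$, that is regular near $0$ and has $q_{\widetilde\Psi}=1$. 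You should make this normalization explicit. (To be fair, the paper itself leaves the same point implicit: Lemma~\ref{lem: Monotonia} lists $q_\Phi=q_\Psi$ among its hypotheses, while Corollary~C as stated does not verify it.)
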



\section{Preliminaries}
\numberwithin{theorem}{section}


\subsection*{Noncommutative Orlicz spaces} Let $\M \subseteq \B(\H)$ be a von Neumann algebra, we will say that it is semifinite if it admits a normal, semifinite and faithful trace $\tau: \M_+ \to [0,\infty]$, see \cite{PiXu2003, GoldsteinLabouschange2020} for the precise definitions. The pair $(\M, \tau)$ will play the part of a noncommutative or quantized measure space, with $\tau$ being analogous to the integration with respect to the measure. Given a pair $(\M, \tau)$ is possible to define the space of $\tau$-measurable operators $L_0(\M,\tau)$, as the space of unbounded operators affiliated to $\M$ satisfying that 
\[
  x \in L_0(\M,\tau)
  \;\; \equivalent \;\;
  \exists \lambda > 0, \; \tau \big( \1_{(\lambda, \infty)}(|x|)\big) < \infty,
\]
where $|x| = (x^\ast x)^\frac12$ is and $\1_{(\lambda, \infty)}$ is applied to the unbounded operator via functional calculus. The space $L_0(\M;\tau)$ is a complete metric vector space with the topology of convergence in measure, see \cite{Terp1981lp}. The noncommutative $L_p$-spaces associated to a pair $(\M,\tau)$ are simply given by 
\[
  L_p(\M, \tau) = \big\{ x \in L_0(\M,\tau) : \tau \big( |x|^p \big)< \infty \big\}
\]
while their associated norm is given by $\| x \|_p = \tau(|x|^p)^\frac1{p}$. As it is common, we will use the convention that $L_\infty(\M, \tau) = \M$ and drop the dependency on $\tau$ whenever there is no ambiguity.

A similar definition of noncommutative Orlicz spaces is available. Let $\Phi: [0,\infty) \to [0,\infty]$ be a function. We will say that it is a \emph{Young function} if
\begin{itemize}
    \item $\Phi$ is non-decreasing,
    \item $\Phi$ is convex,
    \item $\Phi(0) = 0$ and $\displaystyle \lim_{t\to \infty} \Phi(t)= \infty$.
\end{itemize}
Observe that Young functions, with the conventions that will be used throughout this text, can take the value $\infty$ over finite arguments. This may seem estrange but all the conditions, including convexity, make sense for functions with values in $[0,\infty]$ with the convention that $t \cdot \infty = \infty$ if $t \neq 0$. 

Young functions are used to define Orlicz spaces $L_\Phi(\M,\tau)$ in the following way. Let $x \in L_0(\M,\tau)$ be a $\tau$-measurable operator affiliated to $\M$. We have that $x \in L_\Phi(\M,\tau)$ if and only if, for some scalar $\lambda > 0$
\[
  \tau \left\{ \Phi \Big( \frac{|x|}{\lambda} \Big) \right\} < \infty.
\]
Moreover, $L_\Phi$ is a Banach space with norm given by
\[
  \|x\|_{\Phi} = \inf \left\{ \lambda > 0 : \; \tau \circ \Phi \Big( \frac{|x|}{\lambda} \Big) \leq 1 \; \right\}.
\]
When $(\M,\tau)$ is a measure space we have that $L_\Phi(\M,\tau)$ coincides with the classical Orlicz space $L_\Phi(\Omega)$, see \cite{Rutickii1961convex, RaoRen2002TheoryOrlicz, RaoRen2002ApplicationsOrlicz}. Notice as well that Orlicz spaces generalize $L_p$-spaces by taking $\Phi(t) = t^p$. This includes $L_\infty(\M)$ if we take the Young function $\Phi(t) = \infty \cdot \1_{[1,\infty)}(t)$.

Given two Young functions $\Phi_1$ and $\Phi_2$ we would say that $\Phi_2$ grows as fast as $\Phi_1$ if $\Phi_1(t) \lesssim \Phi_2(t)$ for $t$ larger that some quantity. Observe that, although $\lesssim$ is clearly an order relation, it is not a total order. Nevertheless, in many situations it is possible to compare a Young function $\Phi(t)$ with powers $t^\alpha$ in an optimal way, for that, we need to introduce the following indices.


\begin{definition}\label{def: definition of Orlicz indices} Let $\Phi$ be a finite positive Young function, and let $M_\Phi$ be the function:
$$
    M_\Phi(t) = \sup_{s>0} \frac{\Phi(st)}{\Phi(s)}.
$$
The (lower and upper, resp.) Matuszewska-Orlicz indices are defined as:
$$
    p_\Phi = \lim_{t\to 0} \frac{\log M_\Phi(t)}{\log t}, \qquad q_\Phi = \lim_{t\to \infty} \frac{\log M_\Phi(t)}{\log t}.
$$
\end{definition}

A few observations are in order. First, notice that the existence of the limits that define $q_\Phi$ and $p_\Phi$ is justified by the fact that $M_\Phi$ is sub-multiplicative, ie $M_\Phi(t_1  \, t_2) \leq M_\Phi(t_1) \, M_\Phi(t_2)$, together with Fekete's subadditivity lemma \cite{Fekete1923}. Next, the definition of $M_\Phi$ immediately implies that $M_\Phi(t)$ is the best constant such that 
\begin{equation}\label{eq: best doubling constant M}
    \Phi(st)\leq M_\Phi(t)\Phi(s),\qquad \text{for all } s,t>0. 
\end{equation}
The Matuszewska-Orlicz indices encode which are the best polynomial approximations of $M_\Phi(t)$ near zero and infinity, that is 
\[
    \begin{array}{rclll}
        M_\Phi(t) & = & \exp\big(q_\Phi \log t + o(\log t)\big) & \text{ as } & t \to \infty\\[2.5pt]
        M_\Phi(t) & = & \exp\big(p_\Phi \log t + o(\log t)\big) & \text{ as } & t \to 0^+
    \end{array}
\]
From these identities it is easily obtained that when $M_\Phi(t)$ goes to infinity, it grows faster than $t^{q_\Phi-\varepsilon}$ but slower than $t^{q_\Phi+\varepsilon}$ for all $\varepsilon>0$, and similarly near zero. Notice also that
\[
  \frac1{M_\Phi(t^{-1})} 
  \lesssim \Phi(t) \lesssim
  M_\Phi(t) \;\;\;\; \text{ for all } t>0,
\]
and so one can deduce also bounds on the growth and decay of $\Phi$ via $M_\Phi$. Indeed, it holds that
\begin{equation}
  \label{eq: cotas para Phi}
  \min \big\{ t^{p_\Phi - \varepsilon} , t^{q_\Phi + \varepsilon} \big\}
  \lesssim_{(\varepsilon)} 
  \Phi(t) 
  \lesssim_{(\varepsilon)} 
  \max\big\{ t^{p_\Phi - \varepsilon}, t^{q_\Phi + \varepsilon} \big\}
  \;\;\;
  \text{ for every } 
  \;
  t > 0,
\end{equation}
for every $\varepsilon>0$, see \cite{Maligranda1985}. Furthermore, $p_\Phi$ and $q_\Phi$ are the largest and smallest indices for which this holds. Further details about Matuszewska-Orlicz indices  and the function $M_\Phi$ associated to a Young function $\Phi$ can be found in \cite{Maligranda1985}.

\subsection*{Distribution functions and singular numbers} Let $x \in L_0(\M,\tau)$ be a $\tau$-measurable operator, we define the following non-negative, non-increasing functions on $s > 0$ given by
\begin{eqnarray*}
    \lambda_s(x) & = & \tau \big( \1_{(s,\infty)}(|x|) \big)\\
    \mu_s(x) & = & \inf \big\{ t > 0 : \lambda_t(x) \leq s \big\}
\end{eqnarray*}
The first one is referred to as the (noncommutative) distribution function, and the second one as the generalized singular numbers. The first can be seen as a generalization of the classical distribution function $\lambda_s(f) = \mu\big\{ |f| > s \big\}$ while the second, which is just the pseudo-inverse of the first, generalizes the decreasing re-arrangement of $f$. Both functions $\mu_s(x), \lambda_s(x): [0,\infty) \to [0,\infty]$ are non-increasing and lower semicontinuous.

The map that associates an element $x \in L_0(\M, \tau)$ with its generalized singular numbers $\mu_s(x)$ has many useful properties that generalize those of the decreasing rearrangement. For instance it holds that for every positive element $x \in L_0(\N, \tau_\N)$ 
\[
    \tau_\N(x) \, = \, \int_0^\infty \mu_s(x) \, d s.
\]
Furthermore, it holds in general that $\| x \|_p = \| \mu(x) \|_{L_p(0,\infty)}$, for every $1 \leq p \leq \infty$. The map $x \mapsto \mu(x)$ is also order preserving in the sense that, if $x \leq y$ then $\mu_s(x) \leq \mu_s(y)$ for every $s > 0$. Despite these nice properties, the map $x \mapsto \mu_s(x)$ shares certain pathologies with the classical decreasing rearrangement. In particular it fails to be sub-linear, meaning that it is not possible to bound the decreasing rearrangement of a sum by the sum of the decreasing rearrangements. This pathology can be fixed by the introduction of the following order relation. Let  $f, g: (0,\infty) \to [0,\infty]$ be two non-increasing functions, we have the majorization relation given by
\[
  f \preceq g \;\;\;\; \text{ if and only if } \;\;\;\;
  \int_0^s f(t) \, dt \leq \int_0^s g(t) \, d t, \; \forall s > 0
\]
This relation is quite useful, see \cite{CadilhacRicard2023Inter}. In particular it holds that
\begin{equation}
    \label{eq:SublinearPrec}
    \text{ if } \;\; x = \sum_{k} x_k 
    \;\;\;\; \text{ then } \;\;\;\; 
    \mu_s(x) \preceq \sum_{k} \mu_s(x_k).
\end{equation}
We will use that if $\mu_s(x) \preceq \mu_s(y)$ then $\| x \|_p \leq \| y \|_p$ for every $p \in [1,\infty]$. Both properties are easily obtained from the fact that 
\[
    \int_0^s \mu_t(x) \, dt = \| x \|_{L_1(\M) + s L_\infty(\M)}.
\] 
We will also use the notation $D_\eta$ for the dilation $(D_\eta f)(x) = f(\eta^{-1} x)$. In many situation $D_\eta \big(\mu_s(x)\big)$ would implicitly be used to mean dilation in the real variable $s$.

We will need the following lemma involving generalized singular numbers from \cite{CadilhacRicard2023Inter}. 

\begin{lemma}[{\cite[Lemma 2.1]{CadilhacRicard2023Inter}}]
    \label{lemma: Cadilhac-Ricard lemma, binary decomposition}
    Let $p \in (0,\infty)$ and $x\in L^+_p(\N)$. There exist a sequence $(r_n)_{n\in \ZZ}$ of finite projections in $\N$ so that
    \begin{enumerate}[label={\rm \textbf{(\roman*)}}, ref={\rm {(\roman*)}}]
        \item \label{itm: binary1}
        $\displaystyle{x = \sum_{n\in \mathbb{Z}} 2^{-n}r_n \in L_p(\N)}$,
        \item \label{itm: binary2}
        $\displaystyle
            \mu(x^\alpha) 
            \; \preceq \; \sum_{n \in \ZZ} 2^{-n\alpha}\mu(r_n) 
            \; \leq \; 
            \frac{1}{1-2^{-\alpha}} \mu (x^\alpha), \;
        $
        $\forall \alpha>0$.
    \end{enumerate}
\end{lemma}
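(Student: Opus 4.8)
The plan is to read the projections $r_n$ off the spectral resolution of $x$ via its greedy dyadic expansion. For $n\in\ZZ$ I would set $e_n:=\1_{[2^{-n},\infty)}(x)$; these projections in $\N$ increase with $n$ to $\supp(x)$ and decrease to $0$, and Chebyshev's inequality gives $\tau(e_n)\le 2^{np}\,\|x\|_p^p<\infty$, so each $e_n$ is finite. Writing every spectral value $t$ of $x$ in base two, $t=\sum_{n\in\ZZ}\varepsilon_n(t)\,2^{-n}$ with $\varepsilon_n(t)\in\{0,1\}$ (the expansion terminating on dyadic rationals), put $r_n:=\1_{\{\varepsilon_n=1\}}(x)$, a spectral projection of $x$ --- intuitively the piece newly picked up at scale $2^{-n}$ when $x$ is rebuilt greedily from the top. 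By the spectral theorem the partial sums of $\sum_n 2^{-n}r_n$ increase to $x$ and stay $\le x\in L_p$, so the series converges to $x$ in $L_p$, which is \ref{itm: binary1}. Since $\varepsilon_n(t)=1$ forces $t\ge 2^{-n}$, one has $r_n\le e_n$; in particular every $r_n$ is finite and, crucially, $2^{-n}r_n\le x$.

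Next I would isolate the one quantitative fact behind \ref{itm: binary2}: from $2^{-n}r_n\le x$ and monotonicity of singular numbers, $2^{-n}\mu_s(r_n)\le\mu_s(x)$, so $\mu_s(r_n)\le\min\{1,\,2^{n}\mu_s(x)\}$, and in particular $\mu_s(r_n)=0$ whenever $\mu_s(x)<2^{-n}$. This immediately yields the right-hand inequality of \ref{itm: binary2}, in fact pointwise in $s$: fixing $s>0$ and letting $n_s$ be minimal with $\mu_s(x)\ge 2^{-n_s}$, only the indices $n\ge n_s$ contribute, so
\[
  \sum_{n\in\ZZ}2^{-n\alpha}\mu_s(r_n)\;\le\;\sum_{n\ge n_s}2^{-n\alpha}\;=\;\frac{2^{-n_s\alpha}}{1-2^{-\alpha}}\;\le\;\frac{\mu_s(x)^\alpha}{1-2^{-\alpha}}\;=\;\frac{\mu_s(x^\alpha)}{1-2^{-\alpha}},
\]
using $\mu_s(x^\alpha)=\mu_s(x)^\alpha$; this holds for every $\alpha>0$.

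For the left-hand inequality of \ref{itm: binary2} the painless range is $0<\alpha\le 1$. There the subadditivity of $u\mapsto u^\alpha$ gives the scalar bound $t^\alpha=\big(\sum_n\varepsilon_n(t)2^{-n}\big)^\alpha\le\sum_n\varepsilon_n(t)2^{-n\alpha}$ (recall $\varepsilon_n\in\{0,1\}$), and since all operators in sight are functions of $x$ the Borel functional calculus turns this into $x^\alpha\le\sum_n 2^{-n\alpha}r_n$, the right-hand side being $\tau$-measurable because the previous display shows it lies between $x^\alpha$ and $(1-2^{-\alpha})^{-1}x^\alpha$. Monotonicity of singular numbers together with the sublinearity \eqref{eq:SublinearPrec} of $x\mapsto\mu(x)$ under $\preceq$ then gives $\mu_s(x^\alpha)\le\mu_s\big(\sum_n 2^{-n\alpha}r_n\big)\preceq\sum_n 2^{-n\alpha}\mu_s(r_n)$. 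For $\alpha=1$ this collapses to the immediate $\mu(x)\preceq\sum_n 2^{-n}\mu(r_n)$ coming straight from \ref{itm: binary1} and \eqref{eq:SublinearPrec}.

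The genuinely delicate point --- where I expect the effort to go --- is the left-hand inequality for $\alpha>1$: there $u\mapsto u^\alpha$ is superadditive, the comparison above runs backwards, and in fact already for $x$ a scalar multiple of a single projection one checks that no decomposition $x=\sum_n 2^{-n}r_n$ into projections can give $\mu(x^\alpha)\preceq\sum_n 2^{-n\alpha}\mu(r_n)$ with constant $1$. I would deal with $\alpha>1$ by passing instead to the mutually orthogonal dyadic layers $f_n:=e_n-e_{n-1}=\1_{[2^{-n},2^{-n+1})}(x)$, which satisfy $\tfrac12 x\le\sum_n 2^{-n}f_n\le x$ and, by orthogonality, $\big(\sum_n 2^{-n}f_n\big)^\alpha=\sum_n 2^{-n\alpha}f_n$; for the $f_n$ both halves of \ref{itm: binary2} hold for every $\alpha>0$ --- the $\preceq$ directly from \eqref{eq:SublinearPrec}, the reverse with constant $(1-2^{-\alpha})^{-1}$ from the same ``stacking'' bookkeeping as in the second paragraph (the set of $n$ with $\tau(f_n)>s$ is contained in a half-line $\{n\ge n^\ast(s)\}$) --- and the factor $2$ lost in \ref{itm: binary1} is harmless in any rearrangement-invariant norm. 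Thus the clean form of the lemma is the one for the dyadic layers, and in the application to the noncommutative strong maximal only $\alpha\le 1$ is ever used, so this subtlety does not affect what follows.
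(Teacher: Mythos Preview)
Your construction via the binary digits $r_n=\1_{\{\varepsilon_n=1\}}(x)$ is exactly the one the paper sketches, and your arguments for \ref{itm: binary1} and for the upper bound in \ref{itm: binary2} (the explicit geometric computation) are correct and complete; so is your treatment of the $\preceq$ half for $0<\alpha\le 1$ via subadditivity of $t\mapsto t^\alpha$ combined with \eqref{eq:SublinearPrec}.

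Your observation about $\alpha>1$ is also correct and worth recording: the $\preceq$ inequality in \ref{itm: binary2} is simply false in that range, for \emph{any} sequence of projections satisfying \ref{itm: binary1}, as your scalar example $x=3p$ already shows ($3^\alpha$ versus $2^\alpha+1$). The paper's one-line justification ``the first inequality follows from \eqref{eq:SublinearPrec}'' implicitly passes through $x^\alpha\le\sum_n 2^{-n\alpha}r_n$, which holds only when $\alpha\le 1$. So the lemma is overstated. Fortunately, as you point out, the proof of Theorem~\ref{thm:MainTheorem} invokes \ref{itm: binary2} solely at $\alpha=1$, and in fact only the upper bound $\sum_n 2^{-n}\mu(r_n)\le 2\,\mu(x)$ is used there, so nothing downstream is affected. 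Your proposed fix via the orthogonal dyadic layers $f_n=\1_{[2^{-n},2^{-n+1})}(x)$ is the natural way to get a statement valid for every $\alpha>0$, at the price of weakening \ref{itm: binary1} to a two-sided comparison.
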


The above lemma is highly constructive. Indeed, take a number $t \in \RR_+$ and consider its binary expansion. We can fix $d_n(t)$ as the characteristic function of the $t$ with a $1$ on its $-n^\text{th}$ binary digit. Identity \ref{itm: binary1} follows by spectral calculus. The first inequality in \ref{itm: binary2} follows from \eqref{eq:SublinearPrec} while the second requires an explicit computation.


\subsection*{Mixed $L_p(\ell_\infty)$-spaces}
These mixed spaces have already been covered extensively in the literature, see \cite{JunXu2003, JunPar2010}. We briefly recall that if $(x_n)_{n \geq 0}$ is a bounded sequence in $L_p(\N)$ we have that $(x_n)_n \in L_p(\N;\ell_\infty)$ if and only if there is a factorization $x_n = \alpha \, v_n \, \beta$ where $\alpha, \beta \in L_{2p}(\N)$ and $(v_n)_n \in \ell_\infty \bar\otimes \N$. The norm of the space will be given by 
\[
    \big\| (x_n)_n \big\|_{L_p(\N;\ell_\infty)} 
    :=
    \inf \Big\{ \| \alpha \|_{2p} \, \Big( \sup_n \| v_n \| \Big) \, \| \beta \|_{2p}
                : x_n = \alpha \, v_n \, \beta \Big\}.
\]
Observe that, without loss of generality, we can choose $\| \alpha \|_{2 p}^2 = \| \beta \|_{2p}^2 = \| (x_n)_n \|_{L_p[\ell_\infty]}$ and $v_n$ to be a sequence of contractions. The norm above has simpler forms when the sequence $x = (x_n)_n$ is either self-adjoint or positive. In the first case we have that it is possible to take $v_n$ self-adjoint and $\beta = \alpha^\ast$. In the case of positive sequences of operators $x_n \geq 0$, it holds that
\begin{equation}
    \label{eq:FormulaPositivos}
    \big\| (x_n)_{n} \big\|_{L_p(\N;\ell_\infty)}
    = 
    \inf \Big\{ \| \alpha \|_p : \;\; 0 \leq x_n \leq \alpha, \;\; \forall n \geq 0 \, \Big\}.
\end{equation}
We will use this formula repeatedly. Observe that, contrary to many other natural  noncommutative function spaces, the mixed $L_p[\ell_\infty]$-norm of $(x_n)_n$ is not equal to that of $(|x_n|)_n$ or $(|x_n^\ast|)_n$. In fact, the quantities associated to the $L_p[\ell_\infty]$-norm of $|x_n|$ and $|x_n^\ast|$ give rise to asymmetric mixed spaces as studied in \cite{JunPar2010, Hong2016asymmetric}.

\section{Proof of the main theorem}

We will say that two (orthogonal) projections $p$, $q$ are disjoint if $p q = 0$. In the forthcoming discussion we will need the following elementary lemma whose proof we omit. 

\begin{lemma}
    \label{lem:Corners}
    If $S$ is a positive operator on a Hilbert space $H$, and $e$ is an orthogonal projection such that $e^\perp S e^\perp = 0$, then $S =eSe$.
\end{lemma}

Similarly, the proof of our main theorem requires the following diagonal bound lemma.

\begin{lemma}[{\cite[Lemma 3.1.]{CadilhacRicard2023Inter}}]
    \label{lemma: Cadilhac-Ricard lemma, diagonal domination inequality}
    Let $(q_k)_{0\leq k\leq n}$ be a sequence of disjoint projections in $\N$. The for any sequence $(d_k)_{0\leq k \leq N}$ of positive integers and any $x\in L_0^+(\N)$ it holds that
    \begin{equation*}
        \bigg(\sum_{ k = 0}^n q_k \bigg) \, x \, \bigg( \sum_{k = 0}^n q_k \bigg)
        \leq
        \bigg(\sum_{k = 0}^n \frac{1}{ d_k}\bigg) \, \sum_{k = 0}^n d_k \, q_k \, x \, q_k
    \end{equation*}
    and the same follows for infinite sums.
\end{lemma}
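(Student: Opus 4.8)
The plan is to exhibit the left-hand side as a conjugation of the ``diagonal'' operator on the right by a fixed row of scalars, and then invoke the trivial inequality $\rho^{*}\rho\le\|\rho\|^{2}$. Write $y=x^{1/2}\in L_0^+(\N)$ and $Q=\sum_{k=0}^{n}q_k$; since $Q$ is self-adjoint, $\bigl(\sum_k q_k\bigr)x\bigl(\sum_k q_k\bigr)=QxQ=(yQ)^{*}(yQ)$. I would introduce the column $C$ over the $*$-algebra of $\tau$-measurable operators whose $k$-th entry is $\sqrt{d_k}\,y\,q_k$, together with the scalar row $\rho=\bigl(d_0^{-1/2},\dots,d_n^{-1/2}\bigr)\in M_{1,n+1}(\mathbb{C})$.

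Everything then reduces to three one-line identities. First, $\rho C=\sum_{k} d_k^{-1/2}\sqrt{d_k}\,y q_k=y\sum_k q_k=yQ$. Second, using $q_k^{*}=q_k$, $y^{*}=y$ and $y^{2}=x$, one gets $C^{*}C=\sum_k \sqrt{d_k}\,q_k y\cdot\sqrt{d_k}\,y q_k=\sum_k d_k\,q_k x q_k$, which is precisely the diagonal operator appearing on the right. Third, $\rho\rho^{*}$ is the scalar $\sum_k d_k^{-1}$, so $\|\rho\|^{2}=\|\rho\rho^{*}\|=\sum_k d_k^{-1}$ and hence $\rho^{*}\rho\le\bigl(\sum_k d_k^{-1}\bigr)\1$ in $M_{n+1}(\mathbb{C})$.

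Combining these, $QxQ=(\rho C)^{*}(\rho C)=C^{*}(\rho^{*}\rho)C\le\bigl(\sum_k d_k^{-1}\bigr)\,C^{*}C=\bigl(\sum_k d_k^{-1}\bigr)\sum_k d_k\,q_k x q_k$, which is the asserted finite-sum inequality. The conjugation step is legitimate even with $C$ having unbounded measurable entries, because $\bigl(\sum_k d_k^{-1}\bigr)\1-\rho^{*}\rho\ge 0$ is a bounded operator and $C^{*}BC=(B^{1/2}C)^{*}(B^{1/2}C)\ge 0$ for any bounded $B\ge 0$. (Disjointness of the $q_k$ plays no role in the inequality itself; it is the case of interest only because then $Q$ is a projection.) An equivalent elementary packaging is to test on a vector: $\langle QxQ\,\xi,\xi\rangle=\bigl\|\sum_k y q_k\xi\bigr\|^{2}\le\bigl(\sum_k d_k^{-1}\bigr)\sum_k d_k\|y q_k\xi\|^{2}=\bigl(\sum_k d_k^{-1}\bigr)\bigl\langle\sum_k d_k\,q_k x q_k\,\xi,\xi\bigr\rangle$ by the weighted Cauchy--Schwarz inequality $\|\sum_k v_k\|\le(\sum_k d_k^{-1})^{1/2}(\sum_k d_k\|v_k\|^2)^{1/2}$.

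Finally, for infinite sums let $Q_N=\sum_{k=0}^{N}q_k$. The finite case already yields $Q_N x Q_N\le\bigl(\sum_{k\ge 0} d_k^{-1}\bigr)\sum_{k\ge 0} d_k\,q_k x q_k$ for every $N$, the right-hand side being a fixed positive $\tau$-measurable operator independent of $N$; since $Q_N\to Q$ strongly, $Q_N x Q_N\to QxQ$ in the measure topology of $L_0(\N)$ (equivalently, as a limit of the associated quadratic forms on a common core), so the bound is inherited in the limit. The argument is soft throughout, so I do not expect a genuine obstacle: the only points demanding a word of care are keeping the bounded-versus-measurable roles straight when forming $C$ and $C^{*}C$, and this last passage to the limit, both of which are routine.
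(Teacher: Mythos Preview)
Your argument is correct. The matrix factorization $QxQ=(\rho C)^{*}(\rho C)\le \|\rho\|^{2}\,C^{*}C$ with $C_k=\sqrt{d_k}\,x^{1/2}q_k$ and $\rho_k=d_k^{-1/2}$ is exactly the right mechanism, and your alternative vector-wise Cauchy--Schwarz packaging is equivalent. The passage to infinite sums via $Q_N\uparrow Q$ is also fine.

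There is nothing to compare against here: the paper does not supply a proof of this lemma but simply quotes it from \cite[Lemma~3.1]{CadilhacRicard2023Inter}. Your proof is the standard one for this inequality (and is essentially the argument in the cited source). Your side remark that disjointness of the $q_k$ is irrelevant to the inequality itself is accurate; the hypothesis is only used downstream so that $Q$ is a projection. Likewise, nothing in your computation uses that the $d_k$ are integers---positive reals suffice---which matches how the lemma is actually applied later in the paper.
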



We will prove Theorem \ref{thm:MainTheorem} first when $x$ is a finite projection. Then, we will decompose a general $x$ into a sum of projections following Lemma \ref{proposition: the result for projections}. Observe that, since projections are positive, we can use formula \eqref{eq:FormulaPositivos}. 

Indeed, let $r \in \N$ be a finite projection. We want to manufacture a uniform bound for $S_n(r)$ from the projection in Definition \ref{def: maximal weak Orlicz type}\ref{itm:MaxOrlicz1}. First, we will normalize the operator $S = (S_n)_{n \geq 0}$ by dividing it by the maximum of $[S]_{(\Phi_1,\Phi_1)}$ and $[S]_{(\Phi_2,\Phi_2)}$ so that, without loss of generality, we can assume that $[S]_{(\Phi_i,\Phi_i)} \leq 1$, for $i \in \{1,2\}$. Now, fix a monotone sequence of positive scalars $\lambda_k = \eta \, 2^{-k}$, where $\eta > 0$ is a parameter that we will optimize in Remark \ref{rmk: mejora constante} and that can be assumed to be one in Proposition \ref{proposition: the result for projections}. 
By definition we have that, for $i \in \{1,2\}$ and any given scalar $\lambda_k > 0$, there is a projection $e_{\lambda_k}^{i}$ satisfying that
\[
    \begin{cases}
        \displaystyle
        \big\| e_{\lambda_k}^{i} \, S_n(r) \, e_{\lambda_k}^{i} \big\|_{\infty} \leq \lambda_k \;\;\;\; \text{ for all } \; n\geq 0,\\[5pt]
        \displaystyle
        \tau \big( \1 - e_{\lambda_k}^{i} \big) \leq
        \tau \circ \Phi_i \Big(\frac{r}{\lambda_k}\Big)
    \end{cases}
\]
Now, fix an index $k_0 \in \ZZ$ such that $\Phi_2(1/\lambda_{k_0}) < \infty$. We can define a sequence of projections $(e_k)_{k \in \ZZ}$ as 
\[
  e_k \; = \;
    e^1_{\lambda_k} \, \1_{(k_0, \, \infty)}(k) +
    e^2_{\lambda_k} \, \1_{(-\infty, \, k_0]}(k).
\]
Although $(e_k)_k$ may not be decreasing, it can be used to produce a decresing sequence in a natural way as follows
\begin{equation}
    \label{eq:Def_etilde}
    \widetilde{e}_k = \bigwedge_{j \leq k}{e_j}.
\end{equation}
Since $\widetilde{e}_k$ is decreasing $q_k = \widetilde{e}_k - \widetilde{e}_{k+1}$ is a projection that should be understood as the region over which the maximal lays in $(\lambda_{k+1},\lambda_k)$. We define $z_r$ as
\begin{equation}
    \label{eq:DefMajorizer}
    z_r := \bigg(\sum_{k \in \ZZ} \frac{1}{d_k} \bigg) \, \sum_{k\in \ZZ}  d_k \, \lambda_k \, q_{k}
\end{equation}
for some $(d_k)_{k \in \ZZ}$ to be determined later. We have the following

\begin{proposition}
    \label{proposition: the result for projections}
    Let $(\N, \tau_\N)$ and $(\M, \tau_\M)$ be two semifinite von Neumann algebras and $S=(S_n)_{n \geq 0}$ be a family of operators of (maximal) weak type $(\Phi_1, \Phi_1)$ and $(\Phi_2, \Phi_2)$ simultaneously as described in Theorem \ref{thm:MainTheorem}. Given a finite projection $r \in \N$,
    $z_r$, defined as in \eqref{eq:DefMajorizer}, satisfies that
    \begin{enumerate}[label={\rm \textbf{(\roman*)}}, ref={\rm {(\roman*)}}]
        \item \label{itm: Projections0} The projections in \eqref{eq:Def_etilde} satisfy that $\big\| \widetilde{e}_k \, S_n(r) \, \widetilde{e}_k \big\|_\infty \leq \lambda_k$ and that
        \begin{equation}
            \label{eq: Cota para las e_k tilde}
            \tau(\1 - \widetilde{e}_k) 
            \, \leq \;
            2
            \, 
            \begin{cases}
                \displaystyle{
                (\tau \circ \Phi_2) \Big( \frac{r}{\lambda_k }\Big)} & \text{ when } k \leq k_0\\[12pt]
                \displaystyle{
                (\tau \circ \Phi_2) \Big( \frac{r}{\lambda_{k_0}} \Big) + (\tau \circ \Phi_1) \Big( \frac{r}{\lambda_k} \Big)} & \text{ when } k > k_0
            \end{cases}
        \end{equation}
        \item \label{itm: Projections1} $S_n(r) \leq z_r$, for every $n \geq 0$.
        \item \label{itm: Projections2} For some $k_0 \in \ZZ$ as above and a choice of $d_k$ independent of $r$ it holds that
        \begin{equation}
            \label{eq:ProjectionsBound}
            \begin{split}
            \mu(z_r)
            \preceq 
            \bigg( \sum_{k \in \ZZ} \frac1{d_k} \bigg) 
            \Bigg[
            \sum_{k \geq k_0} d_k \, \lambda_k \, & D_{2 \, \Phi_2(\lambda_{k_0}^{-1}) + 2\, \Phi_1(\lambda_{k+1}^{-1})} (\mu(r))  \\
            & + 
            \sum_{k < k_0} d_k \, \lambda_k \, D_{2\, \Phi_2(\lambda_{k+1}^{-1})} (\mu(r))
            \Bigg],
            \end{split}
        \end{equation}
        
        
        \item \label{itm: Projections3} For every $q_{\Phi_1} < p < p_{\Phi_2}$, it is possible to optimize $(d_k)_k$ to obtain that
        $$
            \|z_r\|_p 
            \lesssim 
            \max \big\{ [S]_{(\Phi_1,\Phi_1)},  \, [S]_{(\Phi_2,\Phi_2)} \big\} \, F(p;\Phi_1,\Phi_2) \, \| r \|_p,
        $$
        where $F(p,\Phi_1,\Phi_2)$ is the factor defined in \eqref{eq:DefConstantep}.
    \end{enumerate}
\end{proposition}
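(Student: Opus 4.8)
The plan is to establish the four items in the order in which they are stated: \ref{itm: Projections0} is a direct consequence of the two weak-type hypotheses together with convexity; \ref{itm: Projections1} is the operator-theoretic heart, where the Cadilhac--Ricard domination technique manufactures a \emph{single} majorant $z_r$ dominating all of the $S_n(r)$ at once; \ref{itm: Projections2} is a routine application of the sublinearity of the singular-number map; and \ref{itm: Projections3} is the quantitative optimisation over the free parameters $(d_k)$ that produces the factor $F(p;\Phi_1,\Phi_2)$. For \ref{itm: Projections0}: since $\widetilde{e}_k=\bigwedge_{j\le k}e_j\le e_k$ and $e_k$ is one of the projections $e^i_{\lambda_k}$ from Definition \ref{def: maximal weak Orlicz type}\ref{itm:MaxOrlicz1}, we have $\widetilde{e}_k S_n(r)\widetilde{e}_k=\widetilde{e}_k\big(e_k S_n(r) e_k\big)\widetilde{e}_k$, of norm $\le\lambda_k$. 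For the trace bound, write $\1-\widetilde{e}_k=\bigvee_{j\le k}(\1-e_j)$, so $\tau(\1-\widetilde{e}_k)\le\sum_{j\le k}\tau(\1-e_j)$; since $r$ is a projection, $\tau\circ\Phi_i(r/\lambda_j)=\Phi_i(\lambda_j^{-1})\tau(r)$, and as $\lambda_j=\eta\,2^{-j}$ while convexity of $\Phi_i$ with $\Phi_i(0)=0$ gives $\Phi_i(2^{-m}s)\le 2^{-m}\Phi_i(s)$ for $m\ge 0$, the partial sums collapse to a geometric series bounded by $2\,\Phi_i(\lambda_k^{-1})\tau(r)$. Splitting the sum at $k_0$ (using $\Phi_2$ for the indices $j\le k_0$ and $\Phi_1$ for $j>k_0$) yields exactly \eqref{eq: Cota para las e_k tilde}.

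For \ref{itm: Projections1}, set $\widetilde{e}_\infty=\inf_k\widetilde{e}_k$. From the first half of \ref{itm: Projections0} and $\lambda_k\to 0$ as $k\to+\infty$ one gets $\widetilde{e}_\infty S_n(r)\widetilde{e}_\infty\le\lambda_k\widetilde{e}_\infty\to 0$, hence $\widetilde{e}_\infty S_n(r)\widetilde{e}_\infty=0$, and Lemma \ref{lem:Corners} applied to $S_n(r)\ge 0$ and the projection $\1-\widetilde{e}_\infty$ gives $S_n(r)=(\1-\widetilde{e}_\infty)S_n(r)(\1-\widetilde{e}_\infty)$. On the other hand, the trace estimate in \ref{itm: Projections0} forces $\tau(\1-\widetilde{e}_k)\to 0$ as $k\to-\infty$, so, $\tau$ being normal and faithful, the family $(\widetilde{e}_k)$, which increases as $k\to-\infty$, converges strongly to $\1$; therefore $\1-\widetilde{e}_\infty=\sum_{k\in\ZZ}q_k$ with the $q_k=\widetilde{e}_k-\widetilde{e}_{k+1}$ pairwise disjoint. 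Applying Lemma \ref{lemma: Cadilhac-Ricard lemma, diagonal domination inequality} to $(q_k)$ and the positive operator $S_n(r)$, and using $q_k\le\widetilde{e}_k$ (so $q_k S_n(r)q_k\le\|\widetilde{e}_k S_n(r)\widetilde{e}_k\|_\infty q_k\le\lambda_k q_k$), we obtain
\[
S_n(r)=\Big(\sum_k q_k\Big)S_n(r)\Big(\sum_k q_k\Big)\le\Big(\sum_k\frac1{d_k}\Big)\sum_k d_k\,q_k S_n(r)q_k\le\Big(\sum_k\frac1{d_k}\Big)\sum_k d_k\lambda_k q_k=z_r
\]
for every $n$, with $z_r$ as in \eqref{eq:DefMajorizer}. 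One must also check that $z_r$ is $\tau$-measurable: $\tau(q_k)\le\tau(\1-\widetilde{e}_{k+1})$ together with \ref{itm: Projections0} makes the spectral projections $\1_{(\lambda,\infty)}(z_r)$ of finite trace for large $\lambda$, and full summability of the relevant tails is precisely what the range $p\in(q_{\Phi_1},p_{\Phi_2})$ will guarantee.

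For \ref{itm: Projections2}, apply the sublinearity \eqref{eq:SublinearPrec} to \eqref{eq:DefMajorizer}: $\mu(z_r)\preceq\big(\sum_k\frac1{d_k}\big)\sum_k d_k\lambda_k\,\mu(q_k)$. As $q_k$ is a projection, $\mu(q_k)=\1_{[0,\tau(q_k))}$, and since $\tau(q_k)\le\tau(\1-\widetilde{e}_{k+1})$, the estimate \ref{itm: Projections0} used at index $k+1$ bounds $\mu(q_k)$ pointwise by $D_{2\Phi_2(\lambda_{k+1}^{-1})}\mu(r)$ when $k<k_0$ and by $D_{2\Phi_2(\lambda_{k_0}^{-1})+2\Phi_1(\lambda_{k+1}^{-1})}\mu(r)$ when $k\ge k_0$, because $\mu(r)=\1_{[0,\tau(r))}$; substituting gives \eqref{eq:ProjectionsBound}. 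For \ref{itm: Projections3}, take $L_p$-norms in \eqref{eq:ProjectionsBound} using $\mu(x)\preceq\mu(y)\Rightarrow\|x\|_p\le\|y\|_p$, the triangle inequality in $L_p(0,\infty)$, and $\|D_\eta\mu(r)\|_{L_p}=\eta^{1/p}\|r\|_p$. Writing $b_k=\lambda_k\,\eta_k^{1/p}$ for the dilation factors $\eta_k$ in \eqref{eq:ProjectionsBound}, this yields $\|z_r\|_p\le\big(\sum_k\frac1{d_k}\big)\big(\sum_k d_k b_k\big)\|r\|_p$; by Cauchy--Schwarz the product $\big(\sum_k\frac1{d_k}\big)\big(\sum_k d_k b_k\big)$ is $\ge\big(\sum_k\sqrt{b_k}\big)^2$ with equality at $d_k\propto b_k^{-1/2}$, a choice depending only on $p,k_0,\Phi_1,\Phi_2$ and not on $r$. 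Finally, inserting $\lambda_k=\eta\,2^{-k}$ and comparing with \eqref{eq:DefConstantep} — absorbing the factor $2^{1/p}$ and the doubling constants $M_{\Phi_i}(2)^{1/p}$ coming from the shift $\lambda_{k+1}^{-1}=\eta^{-1}2^{k+1}$ versus $2^k$, all bounded uniformly for $p\ge 1$ — and choosing $k_0$ to nearly realise the infimum defining $F$, one gets $\|z_r\|_p\lesssim F(p;\Phi_1,\Phi_2)\|r\|_p$; undoing the normalisation $[S]_{(\Phi_i,\Phi_i)}\le 1$ produces the stated inequality. That $\sum_k\sqrt{b_k}<\infty$ — hence that the whole estimate is finite — follows from \eqref{eq: cotas para Phi} precisely in the range $q_{\Phi_1}<p<p_{\Phi_2}$.

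The step I expect to be the main obstacle is \ref{itm: Projections1}: the point is to produce one operator $z_r$ dominating every $S_n(r)$ simultaneously, which forces one to control the full two-sided infinite family $(q_k)_{k\in\ZZ}$ — the strong convergence $\widetilde{e}_k\uparrow\1$, the vanishing of the $\widetilde{e}_\infty$-corner through Lemma \ref{lem:Corners}, the infinite-sum form of the diagonal-domination inequality, and the $\tau$-measurability of $z_r$ — all of which must be in place before the clean quantitative estimate of \ref{itm: Projections3} can even be formulated. A secondary difficulty is the constant-chasing in \ref{itm: Projections3} needed to match the bound with $F(p;\Phi_1,\Phi_2)$ exactly and to recognise that the index condition $q_{\Phi_1}<p<p_{\Phi_2}$ is exactly the finiteness condition for $F$.
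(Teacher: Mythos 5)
Your proposal is correct and follows essentially the same route as the paper: item \ref{itm: Projections0} via the projection identity $\tau\circ\Phi(r/\lambda)=\Phi(1/\lambda)\tau(r)$, convexity (equivalently $M_\Phi(s)\le s$ for $s\le 1$) and a geometric series; item \ref{itm: Projections1} via the vanishing corner $\widetilde{e}_\infty S_n(r)\widetilde{e}_\infty=0$, Lemma \ref{lem:Corners}, $\widetilde{e}_k\to\1$ as $k\to-\infty$, and the diagonal-domination Lemma \ref{lemma: Cadilhac-Ricard lemma, diagonal domination inequality}; item \ref{itm: Projections2} via $\mu(q_k)=\1_{[0,\tau(q_k))}$ and $\tau(q_k)\le\tau(\1-\widetilde{e}_{k+1})$; and item \ref{itm: Projections3} via the triangle inequality and optimisation of $(d_k)$ (your Cauchy--Schwarz minimiser $d_k\propto b_k^{-1/2}$ is precisely the paper's explicit choice), with finiteness secured by \eqref{eq: cotas para Phi} in the range $q_{\Phi_1}<p<p_{\Phi_2}$. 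The only cosmetic difference is that you phrase the convergence of the infinite diagonal sum through normality of $\tau$ and measurability of $z_r$, while the paper passes through the truncations $g_N$ and defers the absolute-convergence check to the proof of Theorem \ref{thm:MainTheorem}; both are standard.
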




\begin{proof}

For \ref{itm: Projections0}, start by noticing that, for any finite projection $r \in \N$ and Young function $\Phi$, it holds by spectral calculus that
\begin{equation*}
    \tau \circ \Phi \Big(\frac{r}{\lambda} \Big)
    = \tau(r) \, \Phi \Big( \frac1{\lambda} \Big).
\end{equation*}
First, let $k \leq k_0$. The fact that $\big\| \widetilde{e}_k \, S_n(r) \, \widetilde{e}_k \big\|_\infty \leq \lambda_k$ is trivially verified in any case. For the second, we have that
\begin{eqnarray*}
    \tau\big(\1-\widetilde e_k\big) \; = \; \sum_{j \leq k} \tau\big(\1- e_j\big)
      & \leq & \sum_{j \leq k} \Phi_2\Big( \frac1{\lambda_j} \Big) \, \tau(r)  \\
      & \leq & \bigg( \sum_{j \leq k} M_{\Phi_2}\Big( \frac{\lambda_k}{\lambda_j} \Big)  \bigg) \, {\Phi_2}\Big( \frac1{\lambda_k} \Big)\, \tau(r),
\end{eqnarray*}
where we have multiplied and divided by $\lambda_k$ inside the argument of $\Phi_2$ and used identity \eqref{eq: best doubling constant M}. By convexity, we have that $M_{\Phi}(s) \leq s$ for every $0 \leq s \leq 1$, thus, the constant is bounded by
\[
  \sum_{j \leq k} \frac{\lambda_k}{\lambda_j}
  = \sum_{j \leq 0} 2^{j} = 2,
\]
which is independent of $k$. Observe that, in the case of a non-finite $\Phi_2$, we still have that $\Phi_2(s \, t ) \leq s \, \Phi_2(t)$ for $0 \leq s \leq 1$ and the computation above is still valid.
Now we need to tackle the case of $k_0 < k$. In that case, we have that
\begin{eqnarray*}
    \tau\big(\1-\widetilde e_k\big) & = & \sum_{j \leq k} \tau\big(\1- e_j\big)\\
      & = & \sum_{j \leq k_0} \Phi_2\Big( \frac1{\lambda_j} \Big) \, \tau(r) \; + \; \sum_{k_0 < j \leq k} \Phi_1\Big( \frac1{\lambda_j} \Big) \, \tau(r)\\
      & \leq & \sum_{j \leq k_0} M_{\Phi_2} \Big( \frac{\lambda_{k_0}}{\lambda_j} \Big) \, \Phi_2\Big( \frac1{\lambda_{k_0}} \Big) \, \tau(r) \; + \; \sum_{k_0 < j \leq k} M_{\Phi_1} \Big( \frac{\lambda_{k}}{\lambda_{j}}\Big)\Phi_1\Big( \frac1{\lambda_{k}} \Big) \, \tau(r)\\
      & \leq & 2 \, \bigg[ \Phi_2\Big( \frac1{\lambda_{k_0}} \Big) + \Phi_1\Big( \frac1{\lambda_{k}} \Big)\bigg] \, \tau(r).
\end{eqnarray*}
%
For \ref{itm: Projections1}, first notice that the limit projection $\widetilde{e}_{\infty} = \inf_{k} \widetilde{e}_k$ verifies that $\widetilde{e}_{\infty} \, S_n(r) \, \widetilde{e}_{\infty} = 0$ for all $n$. Since $S_n(r)$ is positive, by Lemma \ref{lem:Corners} it holds that $\widetilde{e}^\perp_\infty \, S_n(r) \, \widetilde{e}^\perp_\infty = S_n(r)$. On the other hand, since $\displaystyle{\lim_{t\to 0} \Phi_2(t)= 0}$, we have that $\widetilde{e}_k \to \1$ weakly as $k \to -\infty$. Thus, the projections
$$
    g_N = \sum_{-N\leq k \leq N} q_k
$$
satisfy that $g_N \to \1-\widetilde{e}_\infty$. Therefore $g_N \, S_n(r) \, g_N \to S_n(r)$. Applying Lemma \ref{lemma: Cadilhac-Ricard lemma, diagonal domination inequality} as well as \ref{itm: Projections0} above gives
\begin{equation*}
\label{eq: definition of z_r}
\begin{split}
    g_N \, S_n(r) \, g_N 
    & \leq 
    \bigg( \sum_{|k| \leq N} \frac{1}{d_k}\bigg) \sum_{|k|\leq N} d_k \, q_k \, S_n(r) \, q_k \\
    & \leq
    \bigg( \sum_{|k| \leq N} \frac{1}{d_k}\bigg) \sum_{|k| \leq N}  d_k \, \lambda_k \, q_k 
\end{split}
\end{equation*}
and taking limits as $N \to \infty$ yields the bound \ref{itm: Projections1}. 

It holds that $q_k \leq \1 - \widetilde{e}_{k+1}$, thus point \ref{itm: Projections2} follows from an application of the $L_p$-convergence of the sum defining $z_r$ in addition to the bound:
\begin{eqnarray*}
    \mu(q_{k-1}) = \1_{[0,\tau(q_{k-1})]}
        & \leq & \1_{[0,\tau(\1 - \tilde{e}_{k})]}\\
        & \leq & D_{2 \, \Phi_2(\lambda_{k_0}^{-1}) + 2 \, \Phi_1(\lambda_k^{-1})} (\mu(r)) \, \1_{(k_0, \infty)}(k)+ D_{2 \, \Phi_2(\lambda_k^{-1})} (\mu(r)) \, \1_{(-\infty,k_0]}(k).
\end{eqnarray*}
Using that the relation $\prec$ is sub-additive, ie \eqref{eq:SublinearPrec}, we get the desired identity.

For \ref{itm: Projections3}, we use the triangular inequality together with the fact that $\tau(q_k) \leq \tau(\1 - \tilde{e}_{k+1})$ and the bounds in \eqref{eq: Cota para las e_k tilde} to obtain that
\begin{eqnarray}
    \|z_r\|_p & = & \bigg\| c_d \, \sum_{k \in \ZZ} d_k \, \lambda_k \, q_k \bigg\|_p \\
        & \leq & c_d \, \sum_{k \in \ZZ} d_k \, \lambda_k \, \tau(\1 - \widetilde{e}_{k+1})^\frac1{p} \nonumber
\end{eqnarray}
Observe that, making the change of variable $k \mapsto k-1$ and renaming the $d_{k-1}$ by $d_k$, we have that 
\begin{eqnarray}
    \|z_r\|_p
        & \leq & 2 \, c_d \, \sum_{k \in \ZZ} d_k \, \lambda_k \, \tau(\1 - \widetilde{e}_{k})^\frac1{p}\\
        & \leq & 4 \, \bigg( \sum_{k \in \ZZ} \frac1{d_k} \bigg) \, \bigg[ \sum_{k > k_0} d_k \, \lambda_k \, \bigg( \Phi_2\Big( \frac1{\lambda_{k}} \Big) + \Phi_1\Big( \frac1{\lambda_{k}} \Big) \bigg)^\frac1{p} + \sum_{k \leq k_0} d_k \, \lambda_k \, \Phi_2\Big( \frac1{\lambda_{k}} \Big)^\frac1{p} \bigg] \nonumber
\end{eqnarray}
Now we need to choose a sequence $(d_k)_k$ that is close to a minimizer for the functional above. Since one of the multiplicative terms increases with $(d_k)_k$ and the other decreases, we can try making the two terms similar, this yield the following choice of coefficients
\[
    d_k 
    =
    \lambda_k^{-\frac12} \, \bigg( \Phi_2\Big(\frac{1}{\lambda_{k}}\Big) + \Phi_1\Big(\frac{1}{\lambda_{k}}\Big) \bigg)^{-\frac1{2 p}} \, \1_{(k_0, \infty)}(k) 
    + 
    \lambda_k^{-\frac12} \,  \Phi_2\Big(\frac{1}{\lambda_{k}}\Big)^{-\frac1{2p}} \, \1_{(-\infty, k_0]}(k).
\]
Substituting in the equation gives that
\begin{eqnarray}
    \|z_r\|_p 
        & \leq & 4 \, 
          \Bigg[ 
          \,
          \sum_{k>k_0} \lambda_k^\frac12 \, \bigg( \Phi_1\Big(\frac{1}{\lambda_{k}}\Big) + \Phi_2\Big(\frac{1}{\lambda_{k_0}}\Big) \bigg)^{\frac1{2p}} 
          +
          \sum_{k\leq k_0} \lambda_k^\frac12 \, \Phi_2\Big(\frac{1}{\lambda_{k}}\Big)^{\frac1{2p}}
          \,
          \Bigg]^2 \|r\|_p. \label{eq: bound of norm p of z_r}
\end{eqnarray}
We have chosen $k_0$ such that $\Phi_2(\lambda_{k_0}^{-1}) < \infty$ and therefore, we can take infimum on the right hand side over $k_0$ satisfying that condition. If $\Phi_2$ is finite everywhere, we already have the required statement. Otherwise, the $k_0$'s for which $\Phi_2(\lambda_{k_0}^{-1}) = \infty$ give a right hand side that is infinite, and thus the infimum is unaffected. Since we have re-scaled our operator $S$, a simple scaling argument gives the factor $\max\big\{ [S]_{(\Phi_1,\Phi_1)}, [S]_{(\Phi_2,\Phi_2)}\big\}$. It only rest to see that $F(p;\Phi_1,\Phi_2)$ is finite when $q_{\Phi_1} < p < p_{\Phi_2}$. For simplicity, we can both multiply and divide the term $\Phi_2(\lambda_{k_0}^{-1})$ by $\Phi_1(\lambda_{k}^{-1})$, then using that $\Phi_1(\lambda_{k_0}^{-1}) \leq \Phi_1(\lambda_{k}^{-1})$, we have that
\[
    F(p;\Phi_1,\Phi_2) 
    \; \leq \;
    \bigg( 1 + \, \frac{\Phi_2(\lambda_{k_0}^{-1})}{\Phi_1(\lambda_{k_0}^{-1})} \bigg)^{\frac1{p}}
    \Bigg[ 
          \,
          \sum_{k>k_0} \lambda_k^\frac12 \, \Phi_1\Big(\frac{1}{\lambda_{k}}\Big)^{\frac1{2p}} 
          +
          \sum_{k\leq k_0} \lambda_k^\frac12 \, \Phi_2\Big(\frac{1}{\lambda_{k}}\Big)^{\frac1{2p}}
          \,
    \Bigg]^2
\]
Since $q_{\Phi_1} < p < p_{\Phi_2}$, there is an $\varepsilon > 0$ such that $q_{\Phi_1} + \varepsilon < p < p_{\Phi_2} - \varepsilon$. An application of \eqref{eq: cotas para Phi} shows that $\Phi_1(\lambda_k^{-1})\leq 2^{k(q_{\Phi_1}+\varepsilon)}$ for $k$ positive and big enough, as well as $\Phi_2(\lambda_k^{-1})\leq 2^{k(p_{\Phi_2}-\varepsilon)}$ when $k$ is negative and big. All this together gives
\begin{equation*}
    F(p;\Phi_1, \Phi_2) 
    \lesssim_{(\varepsilon, \Phi_1, \Phi_2, k_0)} 
    \Bigg[ 
    \,
    \sum_{k>k_0} 2^{\frac{k}{2} \big( \frac{q_{\Phi_1}+\varepsilon}{p} - 1\big)}
    +
    \sum_{k\leq k_0} 2^{\frac{k}{2} \big( \frac{p_{\Phi_2}-\varepsilon}{p} - 1\big)} 
    \,
    \Bigg]^2
    < \infty  
\end{equation*}
and that concludes \ref{itm: Projections3}.
\end{proof}
\begin{proof}[Proof of Theorem \ref{thm:MainTheorem}]
We only have to prove the result for $x\geq 0$ in $L_p$. 
Using Lemma \ref{lemma: Cadilhac-Ricard lemma, binary decomposition} we find a decomposition $x = \sum 2^{-m}r_m$, with $r_m$ a finite spectral projection of $x$. Pick $z_m$ such that $S_n(r_m)\leq z_m$ as in Proposition \ref{proposition: the result for projections}. Define $z = \sum 2^{-m}z_m$, so $S_n(x)\leq z$. To see that $z$ is well defined, we just have to show that the sum $\sum 2^{-m}z_m$ converges absolutely in $L_p$-norm. Actually, we'll have to prove the convergence of a coarser sum a few steps ahead, so we omit this step.

We control first the decreasing rearrangement of $z$ using \eqref{eq:SublinearPrec}, Proposition
\ref{proposition: the result for projections}\ref{itm: Projections2} and Lemma \ref{lemma: Cadilhac-Ricard lemma, binary decomposition}\ref{itm: binary2} for $\alpha=1$:
\begin{eqnarray*}
    \mu(z) & \preceq & \sum_{m\in \mathbb{Z}} 2^{-m} \mu(z_m)\\
        & \preceq & c_d \, \bigg(\sum_{k>k_0}\sum_{m\in \mathbb{Z}}2^{-m}d_k\lambda_k D_{2 \, \Phi_2(\lambda_{k_0}^{-1}) + 2 \, \Phi_1(\lambda_k^{-1})} (\mu(r_m))+ \sum_{k\leq k_0}\sum_{m\in \mathbb{Z}} 2^{-m} \, d_k \, \lambda_k \, D_{2 \, \Phi_2(\lambda_k^{-1})} (\mu(r_m))\bigg)\\
        & = & c_d \, \bigg(\sum_{k>k_0} \, d_k \, \lambda_k \, D_{2 \, \Phi_2(\lambda_{k_0}^{-1}) + 2 \, \Phi_1(\lambda_k^{-1})} \Big( \sum_m 2^{-m}\mu(r_m)\Big) + \sum_{k\leq k_0} d_k \, \lambda_k \, D_{2 \, \Phi_2(\lambda_k^{-1})} \Big(\sum_m 2^{-m}\mu(r_m)\Big)\bigg) \\
        & \leq  & 2 \,  c_d \, \bigg(\sum_{k>k_0} d_k \, \lambda_k \, D_{2 \, \Phi_2(\lambda_{k_0}^{-1}) + 2 \, \Phi_1(\lambda_k^{-1})} (\mu(x)) + \sum_{k\leq k_0} d_k \, \lambda_k \, D_{2 \, \Phi_2(\lambda_k^{-1})} (\mu(x))\bigg)
\end{eqnarray*}
Now we compare the $L_p$-norms of $z$ and $x$:
\begin{eqnarray*}
    \|z\|_p & = & \| \mu(z) \|_p\\
        & \leq & 2 \, c_d \,  \bigg(\sum_{k>k_0} d_k \, \lambda_k \, \big\|D_{2 \, \Phi_2(\lambda_{k_0}^{-1}) + 2 \, \Phi_1(\lambda_k^{-1})} (\mu(x))\big\|_p + \sum_{k\leq k_0} d_k \, \lambda_k \, \big\|D_{2 \, \Phi_2(\lambda_k^{-1})} (\mu(x))\big\|_p\bigg)\\
        & \leq    & 4\, c_d \, \bigg(\sum_{k>k_0} d_k \, \lambda_k \, \bigg( \Phi_1\Big(\frac1{\lambda_k}\Big) + \Phi_2\Big(\frac1{\lambda_{k_0}}\Big) \bigg)^\frac1{p} + \sum_{k\leq k_0} d_k \, \lambda_k \, \Phi_2\Big(\frac1{\lambda_k}\Big)^\frac1{p} \bigg) \, \|x\|_p\\
        & \lesssim & F(p; \Phi_1,\Phi_2) \, \|x\|_p,
\end{eqnarray*}
where $F(p; \Phi_1,\Phi_2)$ is the same function that appears in equation \eqref{eq: bound of norm p of z_r}.
\end{proof}

There are several things to clarify in the preceding proofs. Let us start saying that, although in Proposition \ref{proposition: the result for projections} we use $\lambda_k = 2^{-k}$ all the computations work verbatim with $\lambda_k = \eta \, 2^{-k}$ for some $\eta > 0$, the advantage of this extra degree of freedom is that now we can optimize the constant in terms of $[S]_{(\Phi_1, \Phi_1)}$ and $[S]_{(\Phi_2, \Phi_2)}$.

\begin{remark} \normalfont
    \label{rmk: mejora constante}
    Both in the classical Marcinkiewicz theorem and in the noncommutative one, when the Young functions are given by $\Phi_i(t) = t^{p_i}$, we have that the dependency of the operator $L_p$-norm of $S = (S_n)_{n \geq 0}$ on the weak type quasi norms $[S]_{(\Phi_i, \Phi_i)}$ is not a supremum but a geometric mean, ie:
    \[
      \big\| S: L_p(\N) \to L_p(\M;\ell_\infty) \big\|
      \; \lesssim_{(p, \Phi_1, \Phi_2)} \;
      [S]_{(p_1,p_1)}^\theta \, [S]_{(p_2, p_2)}^{1 - \theta}
      \;\; 
      \text{ for }
      \;\;
      \frac1{p} = \frac{\theta}{p_1} + \frac{1-\theta}{p_2}.
    \]  
    A similar dependency would be desirable when interpolating between two weak Orlicz types, changing of course the exponents $p_1$ and $p_2$ by the adequate Boyd indices. 
    For the sake of brevity we will denote the weak Orlicz type quasi-norms of $S$ with respect to $\Phi_1$ and $\Phi_2$ by $C_1$ and $C_2$ respectively. 
    Assume that $\Phi_2$ is finite everywhere. 
    Choosing $\lambda_k = \eta \, 2^{-k}$ and repeating the computations on Proposition \ref{proposition: the result for projections} gives that
    \begin{equation*}
        \tau( \1 - \widetilde{e}_k)
        \; \leq \;
        2\,
        \begin{cases}
            \displaystyle
            \Phi_2 \Big( C_2 \frac{2^{k}}{ \eta }\Big) \, \tau(r) & \text{ if } k \leq k_0,\\[10pt]
            \displaystyle
            \bigg[ \Phi_2 \Big( C_2 \frac{2^{k_0}}{\eta} \Big) + \Phi_1 \Big( C_1 \frac{2^k}{\eta} \Big) \bigg] \, \tau(r) & \text{ if } k > k_0.
        \end{cases}
    \end{equation*}
    Repeating the computations in the bound of $z_r$ yields that
    \begin{eqnarray*}
        \| z_r \|_p 
            & \lesssim & c_d \, \bigg[ \sum_{k \leq k_0} d_k \, \eta \, 2^{-k} \, \Phi_2\Big( C_2 \frac{2^{k}}{\eta} \Big)^\frac1{p} + \sum_{k > k_0} d_k \, \eta \, 2^{-k} \, \bigg( \Phi_2\Big( C_2 \frac{2^{k_0}}{\eta} \Big) + \Phi_1\Big(C_1 \frac{2^{k}}{\eta} \Big) \bigg)^\frac1{p} \bigg] \, \| r \|_p\\
            & \lesssim & c_d \, \bigg[ \eta \, M_{\Phi_1}\Big( \frac{C_1}{\eta} \Big)^\frac1{p}+\eta \, M_{\Phi_2}\Big( \frac{C_2}{\eta} \Big)^\frac1{p} \bigg] \\
            & & \;\;\;\;\;\;\; \bigg[ \sum_{k \leq k_0} d_k \, 2^{-k} \, \Phi_2( 2^{k} )^\frac1{p} + \sum_{k > k_0} d_k \, 2^{-k} \, \big( \Phi_2(2^{k_0}) + \Phi_1( 2^{k} ) \big)^\frac1{p} \bigg] \, \| r \|_p
    \end{eqnarray*}
    Minimizing the expression above in $d_k$ and following closely the computations on 
    the proof of Theorem \ref{thm:MainTheorem} yields that, for every $q_{\Phi_1} < p < p_{\Phi_2}$
    \[
        \big\| S : L_p(\N) \to L_p(\M;\ell_\infty) \big\|
        \lesssim_{(\Phi_1, \Phi_2)} \inf_{\eta > 0} \bigg[ \eta \, M_{\Phi_1}\Big( \frac{C_1}{\eta} \Big)^\frac1{p}+\eta \, M_{\Phi_2}\Big( \frac{C_2}{\eta} \Big)^\frac1{p} \bigg] \, F(p;\Phi_1, \Phi_2).
    \]
    When $M_{\Phi_1}(t) = t^{p_1}$ and $M_{\Phi_2}(t) = t^{p_2}$, the infimum above can be computed in terms of $\eta > 0$ and it yields precisely the geometric mean of $C_1$ and $C_2$ appearing in the classical case. This is useful in many cases. Similar computations can be repeated in the case of an operator of weak type $(\Phi_1, \Phi_1)$ and strong type $(\infty, \infty)$, in that case, we have that, taking $k_0 = 0$ for simplicity, the inequality \eqref{eq: Cota para las e_k tilde} yields $0$ in the first line as long as $\eta > C_2$. Therefore, the dependency on $C_1$ and $C_2$ becomes
    \[
      \big\| S : L_p(\N) \to L_p(\M;\ell_\infty) \big\|
        \lesssim_{(\Phi_1)} \inf_{\eta > C_2} \bigg\{ \eta \, M_{\Phi_1}\Big( \frac{C_1}{\eta} \Big)^\frac1{p} \bigg\} \, \bigg[\sum_{k\geq 0} 2^{-\frac{k}{2}}  \, \Phi_1(2^k)^{\frac1{2 p}} \bigg]^2
    \]
    and again the infimum on $\eta$ yields the desired geometric mean when $M_{\Phi_1}(t) = t^{p_1}$.
  \end{remark}

\begin{remark} \normalfont
    \label{rmk: mejora exponente}
    An important observation is that the computations in Proposition \ref{proposition: the result for projections} are not optimal. Indeed, it is possible to improve the asymptotic behaviour of the constant in terms of $p$ when we evaluate $S$ over projections. Let $z_r$ be the element defined by \eqref{eq:DefMajorizer}. Since it is given by scalar combinations of orthogonal projections we can actually estimate its $L_p$-norm explicitly
    \begin{eqnarray}
        \|z_r\|_p 
        & = & c_d \, \bigg[\sum_{k \in \ZZ} d_k^p \, \lambda_k^p \, \tau(\1 - \widetilde{e}_{k+1}) \bigg]^\frac1{p} \nonumber \\
        & \lesssim & \bigg( \sum_{k \in \ZZ} \frac1{d_k} \bigg) \, \bigg[ \sum_{k > k_0} d_k^p \, \lambda_k^p \, \bigg( \Phi_2\Big( \frac1{\lambda_{k_0}} \Big) + \Phi_1\Big( \frac1{\lambda_{k}} \Big) \bigg) + \sum_{k \leq k_0} d_k^p \, \lambda_k^p \, \Phi_2\Big( \frac1{\lambda_{k}} \Big) \bigg]^\frac1{p} \, \| r \|_p \label{eq:Bound for z_r in ell_p}
    \end{eqnarray}
    Then optimizing $d_k$ with the following choice of $d_k$ 
    \begin{equation}
        \label{eq: nuevos d_k}
        d_k 
        \; = \;
        \lambda_k^{-\frac{p}{p+1}} \bigg( \Phi_1\Big( \frac1{\lambda_k} \Big) + \Phi_2 \Big( \frac1{\lambda_{k_0}} \Big) \bigg)^{\frac{-1}{p+1}} \, \1_{(k_0,\infty)}(k) + \lambda_k^{-\frac{p}{p+1}} \, \Phi_2 \Big( \frac1{\lambda_{k}} \Big)^{\frac{-1}{p+1}} \, \1_{(-\infty,k_0]}(k)
    \end{equation}
    gives that
    \begin{equation}
      \label{eq:improved bound}
      \|z_r\|_p
      \leq
          \Bigg[ 
          \,
          \sum_{k>k_0} \lambda_k^\frac{p}{p+1} \, \bigg( \Phi_2\Big( \frac1{\lambda_{k_0}} \Big) + \Phi_1\Big( \frac1{\lambda_{k}} \Big) \bigg)^{\frac1{p+1}} 
          +
          \sum_{k\leq k_0} \lambda_k^\frac{p}{p+1} \, \Phi_2\Big(\frac{1}{\lambda_{k}}\Big)^{\frac1{p+1}}
          \,
          \Bigg]^\frac{p+1}{p} \, \| r \|_p
    \end{equation}
    This allows us to refine the statement of Proposition \ref{proposition: the result for projections}\ref{itm: Projections3} by changing $F(p;\Phi_1,\Phi_2)$ by a function $G(p;\Phi_1,\Phi_2)$ given by
    \begin{eqnarray}
        G(p;\Phi_1, \Phi_2) 
        & = &
        \inf_{k_0} 
        \Bigg\{
            \bigg[\sum_{k>k_0}\left(2^{-k}  \, \big( \Phi_2(2^{k_0}) +\Phi_1(2^k) \big)^{\frac1{p}}\right)^{\frac{p}{p+1}}
            +\sum_{k\leq k_0}\left(2^{-k} \, \Phi_2(2^k)^{\frac1{p}}\right)^{\frac{p}{p+1}}\bigg]^\frac{p+1}{p}
        \Bigg\}. \nonumber\\
         & = &
        \inf_{k_0} 
        \bigg\{
            \Big\| \Big( 2^{-k} \big( \Phi_2(2^{k}) {\#}_{k_0} \Phi_1(2^{k}) \big)^\frac1{p} \Big)_{k \in \ZZ} \Big\|_{{\frac{p}{p+1}}}
        \bigg\},
    \end{eqnarray}
    where $\Phi_1(2^k){\#}_{k_0}\Phi_2(2^k)$ is the concatenation of the sequences $(\Phi_1(2^k))_{k\leq k_0}$ and $(\Phi_1(2^k)+\Phi_2(2^k))_{k>k_0}$ as defined in the Introduction. Observe that, when $\Phi_1(t) = t^{p_1}$ and $\Phi_2(t) = t^{p_2}$, this expression becomes 
    \[
      G(p;\Phi_1, \Phi_2) \in O \bigg( \Big( \frac{1}{p - p_1} + \frac{1}{p_2 - p}\Big)^\frac{p+1}{p} \bigg).
    \]
    It is unclear to us whether this behaviour holds for the whole norm of $S: L_p(\M) \to L_p(\N;\ell_\infty)$ and not just its restriction to projections, see \cite[Remark 3.10]{CadilhacRicard2023Inter} for more on the optimal growth of the constant.
\end{remark}

\section{Applications}
In this section we prove that the weak Orlicz type of the maximal operator associated to $(\EE_n \otimes \EE_m)_{n,m}$ can not be improved to $\Phi_\alpha(s) = s \, (1 + \log_+ s)^\alpha$ for $\alpha<2$, nor to any other Orlicz space $L_\Psi$ with $\Psi\in o(\Phi_2)$.
This result is contained in Corollary \ref{cor:growth order of L log L and optimality} below, which we include since it gives a quantitative version of Theorem \ref{thm:MainTheorem} just in terms of the logarithmic exponent. 
In the proof we crucially use the Lemma \ref{lem: Monotonia} from the Introduction, which we now prove.

\begin{proof}[Proof of Lemma \ref{lem: Monotonia}]
    Recall that the space $L_\infty$ coincides with the Orlicz space $L_{\chi_\infty}$ generated by $\chi_\infty = \infty \cdot 1_{(1,\infty)}$.
    The function $F(p;\Phi, \chi_\infty)$ from Theorem \ref{thm:MainTheorem} has the form
    \begin{eqnarray*}
        F(p;\Phi, \chi_\infty) 
            & = & \inf \bigg\{ \bigg[\sum_{k > k_0} 2^{-\frac{k}{2}}  \, \Phi(2^k)^{\frac1{2 p}} \bigg]^2 : \;\; k_0 \; \text{ s.t. } \chi_\infty(2^{k_0}) < \infty \, \bigg\},\\
            & = & \bigg[\sum_{k\geq 0} 2^{-\frac{k}{2}}  \, \Phi(2^k)^{\frac1{2 p}} \bigg]^2,
    \end{eqnarray*}
    and the same formula holds for $F(p;\Psi,\chi_\infty)$.
    
    We need to prove that 
    \[
      \lim_{p \to q_\Phi^+} \; \frac{F(p; \Psi, \chi_\infty)}{F(p; \Phi, \chi_\infty)} = 0.
    \]
    First let's define for every $r\in (0,1)$ the sets
    \begin{equation*}
    \begin{split}
        A_n(r) &= \{k\geq 0\colon \psi(2^k)\leq r^n\varphi(2^k)\} \quad \text{ for } n\geq 1,\\
        E_n(r) &=
        \begin{cases}
            A_1(r)^c & \text{if } n = 0, \\
            A_n(r) \smallsetminus A_{n+1}(r) & \text{if } n\geq 1.
        \end{cases}
    \end{split}
    \end{equation*}

    The family of sets $(E_n(r))_{n\geq 0}$ is a numerable partition of the positive integers into finite subsets. It also holds that
    \begin{equation*}
    \begin{split}
        F(p; \Psi, \chi_\infty)^{1/2}
            &\leq
            \sum_{k\in E_0(r)} 2^{-k/2}\Psi(2^k)^{1/2p}
            +
            \sum_{n\geq 1} r^{n/2p}\sum_{k\in E_n(r)} 2^{-k/2}\Phi(2^k)^{1/2p}\\
            &\leq
            \sum_{k\in E_0(r)} 2^{-k/2}\Psi(2^k)^{1/2p}
            +
            \left( \frac{r^{1/2p}}{1-r^{1/2p}}\right)F(p; \Phi, \chi_\infty)^{1/2}.
    \end{split}
    \end{equation*}
    Since the left term is just a finite sum, it is bounded for all possible $p$. This proves that for all $r\in (0,1)$,
    \begin{equation*}
        \limsup_{p\to q_\Phi^+} \frac{F(p; \Psi, \chi_\infty)}{F(p; \Phi, \chi_\infty)} \leq \left(\frac{r^{1/2q_\Phi}}{1-r^{1/2q_\Phi}}\right)^2,
    \end{equation*}
    which means that the limit must be $0$.
\end{proof}

\begin{corollary}\label{cor:growth order of L log L and optimality}
    Let $S=(S_n)_{n \geq 0}$ be family of positivity-preserving operators like those of Theorem \ref{thm:MainTheorem}. Assume they are of maximal weak type $(\Phi_\alpha, \Phi_\alpha)$ and strong type $(\infty, \infty)$, where $\Phi_\alpha(s) = s \, (1 + \log_+ s)^\alpha$ and $\alpha > 0$. Then
    \begin{enumerate}[label={\rm \textbf{(\roman*)}}]
        \item $S=(S_n)_{n\geq 0}$ is of (maximal) strong type $(p,p)$, and the function 
        \[
            c(p) := \big\| S: L_p(\N) \to L_p(\M;\ell_\infty) \big\| 
            \in 
            O\big(\left(p'\right)^{2+\alpha}\big)
            \;\;\;\; \text{ as } \;\; p \to 1^+
        \]
        \item As a consequence, the strong maximal operator on Problem \ref{prp:ProblemA} is not of weak Orlicz type $(\Phi_\alpha,\Phi_\alpha)$ for any $\alpha < 2$. 
    \end{enumerate}
    
\end{corollary}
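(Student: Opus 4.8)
The plan is to derive part (i) directly from Theorem \ref{thm:MainTheorem} applied to the pair $(\Phi_\alpha, \chi_\infty)$, and then get part (ii) as a contrapositive using Lemma \ref{lem: Monotonia} together with the known optimal $L_p$-blowup of the noncommutative strong maximal. For part (i), I would first record that $\Phi_\alpha(s) = s(1+\log_+ s)^\alpha$ has lower and upper Matuszewska--Orlicz indices $p_{\Phi_\alpha} = q_{\Phi_\alpha} = 1$, since the logarithmic factor is ``slowly varying'' and does not affect the polynomial growth rate. Hence $q_{\Phi_\alpha} < p_{\chi_\infty} = \infty$, and Theorem \ref{thm:MainTheorem} (in the version of Remark \ref{rmk: mejora constante} for a weak type paired with the strong type $(\infty,\infty)$, taking $k_0 = 0$) gives $c(p) \lesssim F(p;\Phi_\alpha,\chi_\infty)$ for all $1 < p < \infty$, where
\[
    F(p;\Phi_\alpha,\chi_\infty) = \bigg[ \sum_{k\geq 0} 2^{-k/2}\, \Phi_\alpha(2^k)^{1/2p} \bigg]^2.
\]
It then remains to estimate this sum as $p \to 1^+$. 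Plugging in $\Phi_\alpha(2^k) = 2^k (1 + k\log 2)^\alpha \asymp 2^k (k+1)^\alpha$, the general term becomes $2^{-k/2} 2^{k/2p} (k+1)^{\alpha/2p} = 2^{-\frac{k}{2}(1 - 1/p)} (k+1)^{\alpha/2p}$. Writing $1 - 1/p = 1/p'$, this is a geometric-times-polynomial series with ratio $2^{-\frac{k}{2p'}}$. A standard estimate (e.g.\ comparing with $\int_0^\infty e^{-ct} t^{\alpha/2p}\,dt = \Gamma(\tfrac{\alpha}{2p}+1) c^{-\alpha/2p - 1}$ with $c \asymp 1/p'$) yields $\sum_{k\geq 0} 2^{-k/(2p')}(k+1)^{\alpha/2p} \asymp (p')^{1 + \alpha/2p}$ as $p\to 1^+$, which is $O\big((p')^{1+\alpha/2}\big)$ since $p \to 1$. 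Squaring gives $F(p;\Phi_\alpha,\chi_\infty) \in O\big((p')^{2+\alpha}\big)$, establishing (i).

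For part (ii), I argue by contradiction: suppose the strong maximal $\Ee = (\EE_n\otimes\EE_m)_{n,m}$ were of weak Orlicz type $(\Psi,\Psi)$ for some Young function $\Psi \in o(\Phi_2)$ with $q_\Psi = q_{\Phi_2} = 1$ (this covers in particular $\Psi = \Phi_\alpha$ for $\alpha < 2$). Since $\Ee$ is also of strong type $(\infty,\infty)$, Theorem \ref{thm:MainTheorem} applied to the pair $(\Psi,\chi_\infty)$ gives $c(p) \lesssim F(p;\Psi,\chi_\infty)$. Now $F(p;\Phi_2,\chi_\infty) \to \infty$ as $p\to 1^+$ by the computation in (i) (it grows like $(p')^4$), so Lemma \ref{lem: Monotonia} applies and yields $F(p;\Psi,\chi_\infty) \in o\big(F(p;\Phi_2,\chi_\infty)\big) = o\big((p')^4\big)$ as $p\to 1^+$. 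On the other hand, the optimal $L_p$-norm of the noncommutative strong maximal is known: by a Fubini-type argument it equals (up to constants) the square of the optimal $L_p$-norm of the noncommutative Doob maximal, which by \cite{Hu2009, JunXu2005Best} behaves like $(p')^2$; hence $c(p) \asymp (p')^4$ as $p\to 1^+$. This contradicts $c(p) \in o\big((p')^4\big)$, so no such $\Psi$ exists, and in particular $\Ee$ is not of weak Orlicz type $(\Phi_\alpha,\Phi_\alpha)$ for $\alpha < 2$.

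The main obstacle I anticipate is twofold. First, in part (i) one must be careful that the constants hidden in ``$\lesssim$'' and in the comparison $\Phi_\alpha(2^k) \asymp 2^k(k+1)^\alpha$ do not depend on $p$, and that the Matuszewska--Orlicz index identity $p_{\Phi_\alpha} = q_{\Phi_\alpha} = 1$ is applied correctly so that Theorem \ref{thm:MainTheorem}'s hypothesis $q_{\Phi_1} < p_{\Phi_2}$ is met on the whole range $p \in (1,\infty)$; the slowly-varying logarithmic factor is exactly what makes the indices collapse to $1$, and one should verify the $o(\log t)$ error term in \eqref{eq: cotas para Phi} absorbs it. Second, in part (ii) the crucial input is the \emph{matching lower bound} $c(p) \gtrsim (p')^4$ for the strong maximal; this is not proved in the excerpt and must be cited — it follows by testing the one-dimensional optimal lower bound of \cite{JunXu2005Best, Hu2009} on a tensor product $L_p(\N)\,\overline\otimes\,L_p(\N)$ and using the fact that the strong maximal factors through two commuting copies of the Doob maximal, so that its $L_p \to L_p[\ell_\infty]$ norm is bounded below by the square of the Doob constant. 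Making this Fubini-type lower bound precise (as opposed to the well-known upper bound) is the delicate point, but it is exactly the ``known optimum'' referenced in the Introduction.
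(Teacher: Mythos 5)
Your proof of part (i) is essentially the paper's: both plug $\Phi_\alpha(2^k)\asymp 2^k(k+1)^\alpha$ into $F(p;\Phi_\alpha,\chi_\infty)=\big[\sum_{k\ge 0}2^{-k/2}\Phi_\alpha(2^k)^{1/2p}\big]^2$ and estimate the resulting geometric-times-power series. The paper writes the estimate via $r(p)=2^{1/2-1/(2p)}$ and $\alpha(p)=\alpha/(2p)$ and quotes the bound $O\big((1-r(p))^{-2(\alpha(p)+1)}\big)$; you do the equivalent Gamma-integral comparison. Both yield $O\big((p')^{2+\alpha}\big)$. Your check of the Matuszewska--Orlicz indices and the cancellation $(p')^{\alpha/2p-\alpha/2}\to 1$ is correct and a useful sanity check the paper leaves implicit.

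For part (ii) you take a genuinely different, slightly more roundabout route. The paper's proof is a one-liner: from (i), $c(p)\in O\big((p')^{2+\alpha}\big)$ and $2+\alpha<4$ when $\alpha<2$, so $c(p)\in o\big((p')^4\big)$, contradicting the known optimal lower bound from \cite{Hu2009}. You instead invoke Lemma \ref{lem: Monotonia} with $\Phi=\Phi_2$ and $\Psi\in o(\Phi_2)$ to conclude $F(p;\Psi,\chi_\infty)\in o\big(F(p;\Phi_2,\chi_\infty)\big)=o\big((p')^4\big)$. This is correct and in fact proves the \emph{more general} Corollary C from the introduction (ruling out any Young function $\Psi\in o(\Phi_2)$ with $q_\Psi=1$), but it is unnecessary machinery for the literal statement at hand, which concerns only $\Phi_\alpha$ with $\alpha<2$ and follows directly from (i) without Lemma \ref{lem: Monotonia}. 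There is no gap: the extra hypotheses of Lemma \ref{lem: Monotonia} (divergence of $F(p;\Phi_2,\chi_\infty)$, matching upper index) are verified. You also correctly identify the one external input both arguments hinge on, namely the lower bound $c(p)\gtrsim (p')^4$ for the noncommutative strong maximal (the Fubini-type square of the Doob constant, cited as \cite{Hu2009, JunXu2005Best}); the paper likewise just cites this without reproving it.
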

\begin{proof}
So from Theorem \ref{thm:MainTheorem} we know that the function
\begin{equation*}
    F(p;\Phi_\alpha, \chi_\infty) = 
            \bigg(\sum_{k\geq 0}2^{-k(\frac{1}{2} - \frac{1}{2p})} (1 + k)^{\frac{\alpha}{2p}}\bigg)^2
\end{equation*}
verifies $\big\| S: L_p(\N) \to L_p(\M;\ell_\infty) \big\| \lesssim F(p;\Phi_\alpha, \chi_\infty)$ for all $p\in (1,\infty)$, with a constant just depending on the Orlicz quasi-norms.

The first part of the statement follows from the fact that $F(p;\Phi_\alpha, \chi_\infty)\in O(\left(p'\right)^{2+\alpha})$. To prove this, define $r(p)= 2^{\frac{1}{2} - \frac{1}{2p}}$
and $\alpha(p) = \frac{\alpha}{2p}$. The function $F(p;\Phi_\alpha, \chi_\infty)$ has the same order as
$$
    \bigg(\sum_{k\geq 0} r(p)^{-k} k^{\alpha(p)}\bigg)^2 \in O\Big(\frac{1}{(1-r(p))^{2(\alpha(p)+1)}}\Big).
$$
And a routine check shows that this term is $O\big({(1-\frac1{p})^{2+\alpha}}\big)$, proving the claim.

The second part of the statement is just an application of the fact that function assigning each $p$ to the $p$-norm of the aforesaid maximal operator is known not to be in $o((p')^4)$, see \cite{Hu2009}.
\end{proof}

\subsubsection*{Acknowledgment} In a unreleased draft of this article the authors claimed, wrongly, that the bound in \eqref{eq:improved bound} in Remark \ref{rmk: mejora exponente} could be extended beyond projections to every element in $L_p$. The authors are thankful to Éric Ricard and Léonard Cadilhac for a helpful discussion that led to the clarification of that point.





\bibliographystyle{plain}
\bibliography{bibliography}    

\end{document}